\numberwithin{equation}{section}
\newtheorem{theorem}{Theorem}[section]
\newtheorem{lemma}[theorem]{Lemma}
\newtheorem{proposition}[theorem]{Proposition}
\newtheorem{cor}[theorem]{Corollary}
\newtheorem{rem}[theorem]{Remark}
\newtheorem{thrm}{Theorem}
\renewenvironment{proof}[1][Proof]{\begin{trivlist}
\item[\hskip \labelsep {\bfseries #1}]}{\qed\end{trivlist}}
\DeclareMathSymbol{\leqslant}{\mathalpha}{AMSa}{"36} 
\DeclareMathSymbol{\geqslant}{\mathalpha}{AMSa}{"3E} 
\DeclareMathSymbol{\eset}{\mathalpha}{AMSb}{"3F}     
\renewcommand{\leq}{\;\leqslant\;}                   
\renewcommand{\geq}{\;\geqslant\;}                   
\newcommand{\floor}[1]{\lfloor #1 \rfloor}
\newcommand{\ind}{\mathbf{1}}
\newcommand{\e}{{\rm e}}
\newcommand{\Ceff}{C_{{\rm eff}}}
\newcommand{\bbE}{{\ensuremath{\mathbb E}} }
\newcommand{\E}{{\ensuremath{\mathbb E}} }
\newcommand{\N}{{\ensuremath{\mathbb N}} }
\newcommand{\bbP}{{\ensuremath{\mathbb P}} }
\renewcommand{\P}{{\ensuremath{\mathbb P}} }
\newcommand{\R}{{\ensuremath{\mathbb R}} }
\newcommand{\bbZ}{{\ensuremath{\mathbb Z}} }
\newcommand{\Z}{{\ensuremath{\mathbb Z}} }
\newcommand{\ga}{\alpha}
\newcommand{\gb}{\beta}
\newcommand{\gd}{\delta}
\newcommand{\gep}{\varepsilon}       
\newcommand{\go}{\omega}
\newcommand{\gl}{\lambda}
\newcommand{\gamin}{\bar \alpha}
\def\captionfont@{\footnotesize}
\def\captionheadfont@{\scshape}
\long\def\@makecaption#1#2{%
  \vspace{2mm}
  \setbox\@tempboxa\vbox{\color@setgroup
    \advance\hsize-6pc\noindent
    \captionfont@\captionheadfont@#1\@xp\@ifnotempty\@xp
        {\@cdr#2\@nil}{.\captionfont@\upshape\enspace#2}%
    \unskip\kern-6pc\par
    \global\setbox\@ne\lastbox\color@endgroup}%
  \ifhbox\@ne 
    \setbox\@ne\hbox{\unhbox\@ne\unskip\unskip\unpenalty\unkern}%
  \fi
  \ifdim\wd\@tempboxa=\z@ 
    \setbox\@ne\hbox to\columnwidth{\hss\kern-6pc\box\@ne\hss}%
  \else 
    \setbox\@ne\vbox{\unvbox\@tempboxa\parskip\z@skip
        \noindent\unhbox\@ne\advance\hsize-6pc\par}%
\fi
  \ifnum\@tempcnta<64 
    \addvspace\abovecaptionskip
    \moveright 3pc\box\@ne
  \else 
    \moveright 3pc\box\@ne
    \nobreak
    \vskip\belowcaptionskip
  \fi
\relax
}
\def\writefig#1 #2 #3 {\rlap{\kern #1 truecm
\raise #2 truecm \hbox{#3}}}
\author[Q.~Berger]{Quentin Berger}
\address{
LPMA, UMR 7599, Université Pierre et Marie Curie, case 188, 4 pl.~Jussieu, 75252 Paris Cedex 5, France \\
{\normalfont {\it email address:} \texttt{quentin.berger@upmc.fr}}}
\author[M.~Salvi]{Michele Salvi}
\address{CEREMADE, UMR 7534, Université Paris Dauphine, Place du Maréchal de Lattre de Tassigny, 75775 Paris Cedex 16, France\\
{\normalfont {\it email address:} \texttt{salvi@ceremade.dauphine.fr}}
}
\begin{document}

\title[Scaling of  1D Random Walks among biased Random Conductances]{Scaling of  sub-ballistic 1D Random Walks among biased Random Conductances}

\begin{abstract}
We consider two models of one-dimensional random walks among biased i.i.d.\ random conductances: the first is the classical exponential tilt of the conductances, while the second comes from the effect of adding an external field to a random walk on a point process (the bias depending on the distance between points). We study the case when the walk is transient to the right but sub-ballistic, and identify the correct scaling of the random walk: we find $\ga \in[0,1]$ such that $\log X_n /\log n \to \ga$. Interestingly, $\ga$ does not depend on the intensity of the bias in the first case, but it does in the second case. 
\medskip

\noindent  \emph{AMS  subject classification (2010 MSC)}: 
60K37, 
  60Fxx, 
  82D30.

\noindent
\emph{Keywords}: Random walk, random environment, limit theorems, conductance model, Mott random walk.

\thanks{ M.~Salvi was financially supported  by the European Union's Horizon 2020 research and innovation program under the Marie Sk\l{}odowska-Curie Grant agreement No 656047.}
\end{abstract}

\maketitle

\section{Introduction}

We consider $(X_n)_{n\in\N}$, a discrete-time nearest-neighbor random walk in a random environment (RWRE) on $\Z$, whose transition probabilities are determined by a random collection $\go:=\{\go_i\}_{i\in\Z}$ of positive numbers in $[0,1]$ sampled according to 
a stationary and ergodic measure $\P$ (we will call $\bbE$ the relative expectation). For a fixed realization of the environment $\go$, $(X_n)_{n\in\N}$ starts at the origin and has transition probabilities
\begin{align*}
P^{\go} \big( X_{n+1} =i+1  \mid X_n= i\big) 
	=\go_i \,, 
	\qquad  P^{\go} \big( X_{n+1} =i-1  \mid X_n= i\big) =1-\go_i \, .
\end{align*}
Necessary and sufficient conditions on $\bbP$ for the random walk to be transient, and to have a positive asymptotic velocity in the transient case, are well known. Let us summarize here some of the results that can be found in Zeitouni, \cite[Theorems~2.1.2 and 2.1.9]{TZ04}: 

\begin{thrm}
\label{thm:transience}
Let $\rho_i:= \frac{1-\go_i}{\go_i}$ and assume that $\bbE[\log \rho_0]$ is well defined.
\begin{itemize}

	\item[(i)] The random walk $(X_n)_{n\in\N}$ is $\bbP$-a.s.: transient to the right if $\bbE[\log \rho_0 ]<0$; transient to the left if $\bbE[\log \rho_0] >0$;  recurrent if $\bbE[\log \rho_0]=0$.
\smallskip
	\item[(ii)] If $\bbE[\log \rho_0] <0$, then $\bbP$-a.s.\ the velocity $\mathbf{v} := \lim_{n\to\infty}  X_n/n$ exists $P^{\go}$-a.s.\ and is equal to $\bbE[\bar S]^{-1} \in[0,+\infty)$ with $\bar S := \frac{1}{\go_0} + \sum_{i=1}^{\infty} \frac{1}{\go_{-i}} \prod_{j=0}^{i-1} \rho_{-j} $.
\end{itemize}
\end{thrm}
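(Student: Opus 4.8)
The plan is to derive both parts from the explicit solution of the one-dimensional Dirichlet and hitting-time recursions, combined with Birkhoff's ergodic theorem applied to the stationary sequence $(\log\rho_i)_{i\in\Z}$. For part (i), introduce the \emph{potential} $V$ determined by $V(0)=0$ and $V(i)-V(i-1)=\log\rho_i$. Since the chain is a nearest-neighbor (hence reversible) walk on $\Z$, solving the discrete Dirichlet problem on an interval yields, for integers $a<0<b$,
\[
P^\go_0(\tau_b<\tau_a)=\frac{\sum_{j=a}^{-1}e^{V(j)}}{\sum_{j=a}^{b-1}e^{V(j)}},\qquad\tau_\ell:=\inf\{k\geq0:X_k=\ell\}.
\]
Birkhoff's theorem gives $V(n)/n\to\bbE[\log\rho_0]$ and $V(-n)/n\to-\bbE[\log\rho_0]$ $\bbP$-a.s., which controls the convergence of $\Sigma^+:=\sum_{j\geq0}e^{V(j)}$ and $\Sigma^-:=\sum_{j\leq-1}e^{V(j)}$. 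Alongside this I record the elementary pathwise trichotomy for a nearest-neighbor path on $\Z$ (taking $\go_i\in(0,1)$, the values $\go_i=1$ being one-way gates that only help): the set of sites visited infinitely often is either empty --- whence $|X_k|\to\infty$ and, by the nearest-neighbor property, $X_k\to+\infty$ or $X_k\to-\infty$ --- or all of $\Z$, i.e.\ the walk is recurrent; the dichotomy holds because infinitely many visits to a site force, through the i.i.d.\ coin tosses performed there, infinitely many visits to both of its neighbours.

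If $\bbE[\log\rho_0]<0$ then $\Sigma^+<\infty$ while $\Sigma^-=\infty$: letting $a\to-\infty$ in the identity above gives $P^\go_0(\tau_b<\infty)=1$ for every $b\geq1$, so $\limsup_k X_k=+\infty$, whereas letting $b\to\infty$ gives $P^\go_0(\tau_{-1}<\infty)=\Sigma^+/(e^{V(-1)}+\Sigma^+)<1$, which rules out recurrence; hence $X_k\to+\infty$ $P^\go$-a.s. The case $\bbE[\log\rho_0]>0$ follows by the reflection $i\mapsto-i$, which sends $\rho_i$ to $\rho_{-i}^{-1}$ and preserves stationarity and ergodicity. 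When $\bbE[\log\rho_0]=0$, both $\Sigma^+$ and $\Sigma^-$ diverge --- this is the point at which one invokes that a mean-zero stationary ergodic sum does not drift to $\pm\infty$ (Atkinson's theorem) --- so the walk reaches every site on each side $P^\go$-a.s.\ and is therefore recurrent. This proves (i).

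For part (ii) assume $\bbE[\log\rho_0]<0$, so that by (i) the walk is transient to the right and all the level-hitting times $\tau_n$ are finite. A standard deterministic comparison for nearest-neighbor paths transient to $+\infty$ shows that $X_n/n$ converges $P^\go$-a.s.\ if and only if $\tau_n/n$ does, to reciprocal limits (with $v=0$ corresponding to $\tau_n/n\to\infty$), so it suffices to analyse $\tau_n$. Conditioning the excursion from $0$ to $1$ on its first step and using the strong Markov property at the hitting time of $0$ gives $E^\go_0[\tau_1]=\tfrac{1}{\go_0}+\rho_0\,E^{\theta^{-1}\go}_0[\tau_1]$; iterating this relation --- or, equivalently, computing $E^\go_0[\tau_1]=\sum_{i\leq0}G^\go(0,i)$ with $G^\go$ the Green's function of the walk killed on first hitting $1$ --- identifies $E^\go_0[\tau_1]$ with $\bar S(\go)\in[1,+\infty]$. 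By the strong Markov property at the a.s.\ finite times $\tau_{k-1}$ one has $E^\go_0[\tau_k-\tau_{k-1}]=\bar S(\theta^{k-1}\go)$, whence $E^\go_0[\tau_n]=\sum_{k=1}^n\bar S(\theta^{k-1}\go)$, and Birkhoff's theorem gives $\tfrac1n E^\go_0[\tau_n]\to\bbE[\bar S]$ $\bbP$-a.s.

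The delicate remaining point is to upgrade this convergence of quenched means to $P^\go$-a.s.\ convergence of $\tau_n/n$ itself. I would argue that, under $\bbP\otimes P^\go$, the inter-level times $(\tau_k-\tau_{k-1})_{k\geq1}$ form a stationary ergodic sequence: the pair $(\go,X)$ viewed from the first hitting of $1$, namely $\big(\theta\go,\,(X_{\tau_1+k}-1)_{k\geq0}\big)$, has the same law as $(\go,X)$ by the strong Markov property and the stationarity of $\bbP$, while ergodicity of the sequence is inherited from that of $\theta$ through the Markov structure. Birkhoff's theorem then yields $\tau_n/n\to\bbE[\bar S]$ (the annealed mean of $\tau_1$), $\bbP\otimes P^\go$-a.s., the case $\bbE[\bar S]=+\infty$ being handled by truncating the increments at a level $M$ and letting $M\to\infty$; combined with the deterministic equivalence this gives $X_n/n\to\bbE[\bar S]^{-1}\in[0,+\infty)$. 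An alternative, more classical route is to bound $\mathrm{Var}^\go(\tau_n)$, deduce a.s.\ convergence along $n_j=j^2$ via Chebyshev and Borel--Cantelli, and fill the gaps using the monotonicity of $n\mapsto\tau_n$. I expect the two genuine obstacles to be precisely this upgrade --- controlling the quenched fluctuations of $\tau_n$, or equivalently establishing ergodicity of the walk--environment pair at regeneration levels --- and, in part (i), the non-drift input (Atkinson) needed in the recurrent regime under only stationarity and ergodicity.
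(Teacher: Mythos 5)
You should first note that the paper contains no proof of Theorem \ref{thm:transience}: it is imported verbatim from Zeitouni \cite[Theorems 2.1.2 and 2.1.9]{TZ04}, so the only meaningful benchmark is that classical Solomon--Alili--Zeitouni argument, and your outline is essentially that argument. For (i), the exit-probability formula $P^\go_0(\tau_b<\tau_a)=\sum_{j=a}^{-1}e^{V(j)}/\sum_{j=a}^{b-1}e^{V(j)}$ is correct, Birkhoff applied to the potential gives the convergence/divergence of $\Sigma^\pm$, and you correctly identify that the mean-zero case under mere ergodicity needs an extra recurrence-of-cocycle input (Atkinson's theorem); this matches the standard proof. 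For (ii), the identity $E^\go_0[\tau_1]=\bar S(\go)$, the decomposition $E^\go_0[\tau_n]=\sum_{k=1}^n\bar S(\theta^{k-1}\go)$, and the passage from $\tau_n/n$ to $X_n/n$ are all fine; for the latter, in the positive-speed case you should also rule out linear backtracking, which is deterministic once $\tau_k/k$ converges, via $k-\min_{m\ge\tau_k}X_m\le(\tau_{k+1}-\tau_k)/2$ (the same inversion the paper uses in Corollary \ref{settimanaccia}), while in the zero-speed case the upper bound $X_n\le k_n$ alone suffices.

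The one genuine soft spot is exactly the step you flag: ergodicity of $(\tau_k-\tau_{k-1})_{k\ge1}$ under $\bbP\otimes P^\go$. Stationarity does follow from the strong Markov property at $\tau_1$ together with stationarity of $\bbP$, but ``ergodicity is inherited through the Markov structure'' conceals the real content of the cited theorem. If $A$ is invariant under the shift $(\go,X)\mapsto(\theta\go,(X_{\tau_1+\cdot}-1))$, the Markov property gives that $\phi(\go):=P^\go(A)$ is $\theta$-invariant, hence constant; but upgrading that constant to $\{0,1\}$ requires an additional martingale/zero--one-law argument showing that $\ind_A$ agrees a.s.\ with an environment-measurable, shift-invariant event (this is the content of the lemma in Zeitouni's and Alili's proofs). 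Your fallback route -- bounding $\mathrm{Var}^\go(\tau_n)$ and using Chebyshev along $n_j=j^2$ -- does not work at the stated level of generality, since $\tau_1$ need not have finite quenched second (or even first) moment; indeed the regimes studied in this very paper have $\bbE[\bar S]=+\infty$. By contrast, your truncation remark is fine: once stationarity and ergodicity of the increments are established, Birkhoff with possibly infinite mean yields $\tau_n/n\to\bbE[\bar S]\in[1,+\infty]$ and hence $X_n/n\to\bbE[\bar S]^{-1}$, including $\mathbf{v}=0$. So: correct skeleton, identical in structure to the cited proof, with the acknowledged ergodicity step being the place where a complete write-up must supply the Alili/Zeitouni argument rather than a one-line appeal to the Markov property.
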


One possibility for choosing the environment $\go$ is to sample with measure $\P$ a shift-ergodic sequence of positive random variables $\{c_{k}\}_{k\in \bbZ}$ attached to the edges of $\bbZ$ -- called \emph{conductances} -- and define for $i\in\Z$
\begin{equation}
\label{def:omega}
\go_i := \frac{c_{i}}{c_{i-1} + c_{i}} \, .
\end{equation}
We call the associated walk a random walk among random conductances (RWRC) and point out that $(X_n)_{n\in\N}$ is reversible with respect to the measure $\pi(k)=c_{k-1}+c_k$.
%
%
%
In this case, the $\rho_i$'s of Theorem \ref{thm:transience} are given by
$\rho_i=\frac{c_{i-1}}{c_{i}}$ and the formula for $\bar S$ simplifies to $\bar S = 1+ 2 \sum_{i=1}^{+\infty} \frac{c_{-i}}{c_0}$. It is not hard to see that the RWRC on ergodic conductances has always asymptotic speed $\mathbf{v}=0$.

\smallskip

In this paper we consider conductances sampled in an i.i.d.~manner, but we add an external force --~or bias~--, which ``pushes'' the random walk, say to the right. There are at least two natural choices for doing so, as discussed below. We will produce random walks $(X_n)_{n\in\N}$ that are transient  but have zero speed.
%
Our aim is to identify the correct scaling for $X_n$, \textit{i.e.}\ find $\ga\in[0,1]$ such that $X_n$ is typically of order $n^{\ga +o(1)}$. We stress that  the case of an RWRE with i.i.d.\ $\go_i$'s has been studied, for example in \cite{KKS75,ESZ09,ESTZ13}, and the picture is complete: the correct scaling has been identified as well as the scaling limits. However, in the case of i.i.d.\ conductances the $\go_i$'s are not i.i.d. anymore, and the phenomenology is actually very different, see  the discussion in Section \ref{sec:product}.

\smallskip

The first way of tilting the conductances (see Section \ref{subsection:birc}) is to multiply the $k$-th conductance by a factor $\e^{2k\gd}$, for some $\gd>0$. We call the related process random walk among Biased i.i.d.~Random Conductances (BiRC).
Somewhat surprisingly, the behavior of sub-ballistic BiRC has been studied only in dimension $d\geq 2$. In \cite{F13}, Fribergh shows that the walk is ballistic if and only if the expectation of the conductances is finite and then identifies the right order of rescaling in the sub-ballistic regime, depending on the tail of the distribution of the conductances at $+\infty$ (finer results are given in \cite{FK16}, under stronger assumptions). This represents one of the main differences with the one-dimensional model: as it will be clear from 
Theorem~\ref{thm:Biased1}, in our case both the integrability of the conductance \textit{and} the integrability of the inverse of the conductance play a role for determining the ballisticity of the walk and the right rescaling exponent. Roughly speaking, this is due to the fact that in higher dimension the walk will naturally go around traps generated by edges with a small conductance, whereas for $d=1$ it is not possible to avoid them.

\smallskip

The second way of adding a bias (see Section \ref{subsection:r1m}) is inspired from Physics: the present work was motivated by the study of the sub-ballistic regime for the Mott variable-range hopping (see \cite{FGS16,FGS17} and references therein), a model for the description of the movement of electrons in doped semi-conductors. 
The Mott walk is a long-range random walk on a random point process on $\R$ (we can see it as a walk on $\Z$ by projecting it); it may jump from its current position to any other site with a probability that decays exponentially in the distance and depends on some random energies associated to each point. One can then add an external electric field that induces a bias on the walk, but the bias depends this time on the distance between points. 
We consider here a simplification of this model, but we believe that our analysis captures the essence of the original one. We ignore the energies and allow the walk to jump only to its nearest-neighbors. We call it Range-$1$ Mott walk (R1M). We are not aware of any result for the sub-ballistic R1M in any dimension.
Interestingly, we find that the behavior of the R1M is very different from that of the BiRC: the scaling of the walk depends this time on the distribution of the conductances but \textit{also} on the intensity of the bias.
%
%
%
%

\subsection{Biased i.i.d.~Random Conductances (BiRC) with heavy tails}\label{subsection:birc}
We take an i.i.d.\ sequence $\{c_{k}\}_{k\in \bbZ}$ of random conductances under $\P$ and consider the \emph{biased} random walk among these conductances.
This corresponds to taking  conductances
\begin{equation}
\label{eq:biased_cond}
c_k^{\gd} = \e^{2\gd k } c_k  \, ,
\end{equation} 
where $\gd>0$ is the bias intensity.
Then, \eqref{def:omega} and the $\rho_k$'s appearing in Theorem \ref{thm:transience} become
\begin{align}\label{cucciolo1}
\go_k 
	= \frac{\e^{\gd} c_k}{ \e^{\gd} c_k + \e^{-\gd} c_{k-1}} 
	\qquad  \text{and } \qquad  
 	\rho_k  = \e^{-2\gd} \frac{c_{k-1}}{ c_{k}} \, .
\end{align}

We find that $\bbE[\log \rho_0] = - 2\gd <0$ so that 
Theorem~\ref{thm:transience} ensures that
$(X_n)_{n\in\N}$ is indeed transient to the right for any $\gd>0$ and the asymptotic velocity is
\begin{equation}
\mathbf{v}(\gd) = \frac{1}{\bbE[\bar S]} \qquad \text{with } \quad\bbE[\bar S] = 1 + 2 \bbE[c_0]\bbE[1/c_0] \frac{\e^{-2\gd}}{1-\e^{-2\gd}} \, .
\end{equation}
Hence, $\mathbf{v}(\gd)>0$ if and only if $\bbE[c_0] <+\infty$ and $\bbE[1/c_0] <+\infty$.
We realize that the zero velocity regime can occur for two different reasons: either $\bbE[c_0] = +\infty$, \textit{i.e.}\ $c_0$ has some heavy tail at $+\infty$; either $\bbE[1/c_0] = +\infty$, \textit{i.e.}\  $c_0$ has some heavy tail at $0$. 
This reflects the fact that a slowdown of the random walk is usually due to the presence of traps, \textit{i.e.}~regions of the space where it tends to spend the majority of the time, of two different kind (see Figure~\ref{figure1}). The first kind of trap  is produced by an edge $\{k,k+1\}$ with conductance $c_k=M\gg 0$ surrounded by two edges with conductances of $O(1)$: in this case the walk will tend to jump back and forth from point $k$ to $k+1$  several times (roughly a geometric number of times of mean $M$) before hopefully escaping to the right. The other kind of trap is represented by an edge $\{k,k+1\}$ with a conductance $\varepsilon\ll 1$: here the walk will attempt to jump to $k+1$ every time it finds itself on $k$, but it will manage to do it only with, roughly, probability~$\varepsilon$.

\begin{figure}[htb]
	\centering
	\setlength{\unitlength}{0.12\textwidth}  
	\begin{picture}(-0.3,1)(2,0.7)
		\put(-2,0){\includegraphics[scale=0.7]{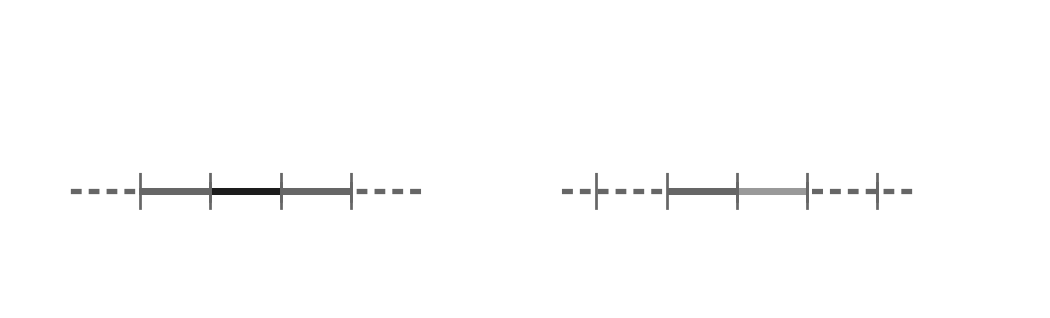}}
		\put(-1.15,0.65){\small $k-1\quad \;\;k\quad\,\, k+1\;\; k+2$}
 		\put(2.94,0.65){\small $k-1\quad \;\;k\quad\,\, k+1\;\; k+2$}
		\put(-0.92,1.4){ $\,O(1)\;\;\; M\quad\, O(1)$}
 		\put(-0.76,1.18){ $\downarrow\qquad\downarrow\qquad\downarrow$}
		\put(3.21,1.4){ $O(1)\;\;\;\; \varepsilon$}
		\put(3.33,1.18){ $\downarrow\qquad\downarrow$}
\end{picture}
\caption{\textit{On the l.h.s.~a trap of the BiRC due to the presence of a large conductance, $c_k=M\gg1$, surrounded by two conductances of $O(1)$. 
On the r.h.s.~a trap generated by a  small conductance $c_k=\varepsilon\ll 1$ preceded by a conductance of $O(1)$.}}
 \label{figure1}
\end{figure}

We therefore need to make some  assumption on the tail of $c_0$ at  $+\infty$ and $0$ to understand the asymptotic behavior of $(X_n)_{n\in\N}$: 
we suppose that there are some $\ga_\infty ,\ga_0\in[0,+\infty]$ such that
\begin{equation}
\label{hyp:tail1}
\lim_{M\to+\infty} \frac{\log \bbP(c_0 >M)}{\log M} = -\alpha_\infty \,, \qquad \lim_{\gep \to 0} \frac{\log \bbP(c_0<\gep)}{\log \gep} = \alpha_0 \, .
\end{equation}
We denote $\gamin := \min(\alpha_0,\alpha_{\infty})$, so that if the random walk is sub-ballistic then necessarily  $\gamin\in[0,1]$.

\begin{theorem}
\label{thm:Biased1}
Let $\gd>0$ and assume that \eqref{hyp:tail1} holds with $\gamin = \min(\alpha_0,\alpha_{\infty}) \leq 1$. Then we have
\[ \lim_{n\to\infty} \frac{\log X_n}{\log n} = \gamin  \qquad \bbP\otimes P^\go-a.s.\]
\end{theorem}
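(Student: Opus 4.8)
Write $T_m:=\inf\{n\ge0:X_n=m\}$ and $X_n^*:=\max_{k\le n}X_k=\sup\{m:T_m\le n\}$. Since the walk is transient to the right, $T_m<\infty$ for all $m$ and $T_m\uparrow\infty$, so the identity $\{X_n^*\ge m\}=\{T_m\le n\}$ reduces $\log X_n^*/\log n\to\gamin$ to $\log T_m/\log m\to1/\gamin$ (with $1/0:=+\infty$), and the theorem will follow once one also checks that $X_n^*-X_n\le n^{o(1)}$ a.s. The backbone is the classical identity (see \cite[Sect.~2.1]{TZ04}): by the strong Markov property at successive hitting times,
\[
E^{\go}_0[T_n]=\sum_{i=0}^{n-1}u_i,\qquad u_i:=E^{\go}_i[T_{i+1}]=1+2\sum_{j\ge1}\prod_{l=0}^{j-1}\rho_{i-l},
\]
which for the BiRC, inserting $\rho_k=\e^{-2\gd}c_{k-1}/c_k$, telescopes into $u_i=1+2Y_i$ with
\[
Y_i:=\frac1{c_i}\sum_{j\ge1}\e^{-2\gd j}c_{i-j}.
\]
Everything then comes down to estimating $T_n$ through these trap variables $Y_i$.

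\emph{Upper bound on $T_n$ (hence $X_n^*\ge n^{\gamin-o(1)}$), assuming $\gamin>0$.} I would run a quenched first--moment argument. Writing $Y_0=c_0^{-1}\cdot B$ with $B:=\sum_{j\ge1}\e^{-2\gd j}c_{-j}$ independent of $c_0$, subadditivity of $x\mapsto x^\theta$ for $\theta\le1$ gives $\bbE[B^\theta]\le\bbE[c_0^\theta]\sum_j\e^{-2\gd j\theta}<\infty$ whenever $\theta<\alpha_\infty$ and $\theta\le1$, while \eqref{hyp:tail1} gives $\bbE[c_0^{-\theta}]<\infty$ for $\theta<\alpha_0$; since $\gamin\le1$ both hold for every $\theta<\gamin$, whence $\P(Y_0>t)\le t^{-\theta}$ for $t$ large, for every $\theta<\gamin$. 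Now fix $\gep>0$, set $L_n:=n^{1/\gamin+\gep/2}$, and split $\sum_{i<n}Y_i=\sum_{i<n}Y_i\ind_{Y_i\le L_n}+\sum_{i<n}Y_i\ind_{Y_i>L_n}$. For the first sum, linearity gives $\bbE[\sum_{i<n}Y_i\ind_{Y_i\le L_n}]\le n\,\bbE[Y_0\ind_{Y_0\le L_n}]=O(n\,L_n^{1-\theta})$, which after a Markov inequality is $\le n^{1/\gamin+\gep}$ with probability $1-O(n^{-c})$ once $\theta$ is chosen close enough to $\gamin$; it is exactly here that $\gamin\le1$ is used, since it makes $L_n^{1-\theta}$ a strictly smaller power of $L_n$, i.e.\ it forces the (infinite--mean, when $\gamin<1$) traps to aggregate in the stable--law fashion, so that $\sum_{i<n}Y_i$ is governed by its \emph{largest} term and not by $n$ times a typical one. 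For the second sum, $\P(\exists\,i<n:Y_i>L_n)\le n\,\P(Y_0>L_n)=O(n^{-c})$. Borel--Cantelli along $n=2^k$ and monotonicity of partial sums then give $\sum_{i<n}Y_i\le n^{1/\gamin+2\gep}$ eventually $\P$-a.s., hence $E^{\go}_0[T_n]\le n^{1/\gamin+3\gep}$ eventually (using $1/\gamin\ge1$); a further Markov inequality under $P^{\go}$, Borel--Cantelli along $2^k$, and monotonicity of $m\mapsto T_m$ give $T_n\le n^{1/\gamin+4\gep}$ eventually, $\P\otimes P^{\go}$-a.s. Letting $\gep\downarrow0$, $\limsup_n\log T_n/\log n\le1/\gamin$.

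\emph{Lower bound on $T_n$.} Here one deep trap suffices. Each visit to $i$ produces a jump to $i+1$ with probability $\go_i$, so $T_{i+1}-T_i$ stochastically dominates a Geometric$(\go_i)$ variable; since $\go_i^{-1}=1+\rho_i=1+\e^{-2\gd}c_{i-1}/c_i$, whenever $c_{i-1}/c_i\ge t$ one gets $P^{\go}\big(T_{i+1}-T_i\ge\tfrac{\e^{-2\gd}}{2}t\big)\ge\gamma$ for a universal $\gamma>0$ and all $t$ large. From \eqref{hyp:tail1} one checks $\P(c_{-1}/c_0>t)\ge t^{-\gamin-\eta}$ for every $\eta>0$ and $t$ large (make $c_0$ small with $c_{-1}$ of order one, or vice versa). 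Thus, for $\gep>0$ and $t_n:=n^{1/\gamin-\gep}$, the number $N_n$ of even indices $i\le n$ with $c_{i-1}/c_i>t_n$ is, after choosing $\eta$ small, a sum of $\sim n/2$ i.i.d.\ Bernoullis with mean $\bbE N_n\gtrsim n^{c}$ for some $c>0$, so $\P(N_n<\tfrac12\bbE N_n)$ decays stretched--exponentially; and conditionally on $\{N_n\ge\tfrac12\bbE N_n\}$, independence (given $\go$) of the increments $T_{i+1}-T_i$ over these well--separated indices yields $P^{\go}\big(T_n<\tfrac{\e^{-2\gd}}{2}t_n\big)\le(1-\gamma)^{N_n/2}$. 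Summing the two stretched--exponential bounds over $n$ and applying Borel--Cantelli gives $T_n\ge n^{1/\gamin-2\gep}$ eventually, $\P\otimes P^{\go}$-a.s., so $\liminf_n\log T_n/\log n\ge1/\gamin$ when $\gamin>0$; the same scheme with $t_n=n^{K}$ (using $\P(c_{-1}/c_0>t)\ge t^{-\eta}$ for all $\eta>0$ when $\gamin=0$) gives $T_n\ge n^{K}$ eventually for every $K$, i.e.\ $\log T_n/\log n\to+\infty$.

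\emph{From $X_n^*$ to $X_n$, and the main obstacle.} To control backtracking (needed only for $\gamin>0$; otherwise $X_n\le X_n^*\le n^{o(1)}$ already concludes) I would use the exact one--dimensional identity $P^{\go}_M(T_{M-r}<\infty)=\big(\sum_{k>M}\prod_{i=M-r+1}^{k-1}\rho_i\big)\big/\big(\sum_{k>M-r}\prod_{i=M-r+1}^{k-1}\rho_i\big)$; since $\prod_{i=M-r+1}^{k-1}\rho_i=\e^{-2\gd(k-M+r-1)}c_{M-r}/c_{k-1}$ and the denominator is $\ge1$, this is $\le\e^{-2\gd r}\,c_{M-r}\sum_{j\ge0}\e^{-2\gd j}/c_{M+j}$. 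A strong Markov decomposition over the successive records gives $P^{\go}(X_n^*-X_n\ge r)\le(n+1)\max_{M\le n}P^{\go}_M(T_{M-r}<\infty)$, and since all the conductance factors above are at most polynomial in $n$ uniformly over $M\le n$ (union bound plus \eqref{hyp:tail1}), taking $r=\lceil n^\gep\rceil$ makes the right--hand side summable in $n$; hence $X_n^*-X_n\le n^\gep$ eventually, $\P\otimes P^{\go}$-a.s. Combined with the two bounds on $X_n^*$ this yields $n^{\gamin-2\gep}\le X_n\le n^{\gamin+\gep}$ eventually for all small $\gep>0$, which is the claim. The genuinely delicate point is the upper bound on $T_n$: showing that $E^{\go}_0[T_n]=n+2\sum_{i<n}Y_i$ is of order $n^{1/\gamin+o(1)}$ and not $n^{1+1/\gamin}$, i.e.\ that this sum of heavy--tailed trap variables is dominated by its largest term. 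This is what forces the truncation step, relies on $\gamin\le1$, and needs the fractional--moment estimate (rather than a naive bound on the product $c_0^{-1}\cdot B$) in order to pin the tail exponent of $Y_0$ exactly at $\gamin$.
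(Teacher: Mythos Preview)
Your proof is correct and follows essentially the same route as the paper: reduce to $\log T_n/\log n\to 1/\gamin$ via a backtracking estimate, obtain the upper bound on $T_n$ from a quenched first-moment/truncation argument on trap variables with tail exponent $\gamin$, and the lower bound by counting deep traps and using the geometric law of escape times. The implementation differs only cosmetically --- the paper uses the reversibility/effective-conductance representation of $E^\go[T_n]$ (which coincides with your $\sum_i u_i$ after swapping the order of summation), derives the tail of the trap variable via a product-of-heavy-tails lemma rather than fractional moments, and gets a slightly sharper backtracking bound of $C\log n$ in place of your $n^\gep$.
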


We stress that, in order to find a finer scaling for $X_n$, the assumption \eqref{hyp:tail1} on the conductances  needs to be strengthened. A natural way of doing so is to assume that there are slowly varying functions $L_\infty(\cdot)$ and $L_{0}(\cdot)$ such that
\begin{equation}
\label{regvar}
\bbP(c_0 >M) \stackrel{M\to\infty} \sim  L_{\infty}(M) M^{-\ga_{\infty}} \, , \quad \text{and} \quad \bbP\big(c_0<\gep \big)\stackrel{\gep\to0} \sim  L_{0}(\gep) \gep^{\ga_{0}}\, . 
\end{equation}
In \cite{FK16}, the authors consider the dimension $d\geq 2$, and use this assumption (for the tail at $+\infty$, since only large conductance traps exist in that case) to show that  the properly rescaled walk converges in law to the inverse of a stable subordinator. In dimension 1, the situation is fairly complicated due to the existence of several types of trap. When $\alpha_0\neq \alpha_\infty$ then the deepest traps are of one of the types presented in Figure \ref{figure1}. But when $\alpha_0 =\alpha_\infty$, a third kind of trap may be predominant (depending on the slowly varying functions $L_{\infty}$ and $L_{0}$), as a combination of two consecutive edges, one with high and the other with small conductance -- we refer to Section \ref{sec:product} for further discussion. This more subtle landscape is beyond the scope of this paper, and we postpone its study to a subsequent work.

\subsection{The  range-one Mott hopping model (R1M) with an external field}
\label{subsection:r1m}

%

We consider  a random sequence of positive random variables $(Z_i)_{i\in \bbZ}$ that are ergodic and shift invariant under  $\P$. The $(Z_i)_{i\in \bbZ}$ determine the position of points $\{x_i\}_{i\in\Z}$ on $\R$ such that $x_0:=0$, $x_k = \sum_{i=0}^{k-1} Z_i$ if $k\geq 1$ and $x_{k} = -\sum_{i=1}^{k} Z_{-i}$ if $k\leq -1$. The range-one Mott random walk $(\tilde Y_n)_{n\in\N}$ is the process starting in zero and jumping from point $x_k$ to point $x_{k+1}$ with probability equal to $\frac{\e^{- Z_k } }{ \e^{-Z_k } + \e^{- Z_{k-1} }}$ and to $x_{k-1}$ with the remaining probability.  We can also see this walk as the RWRC on the set of points $\{x_k\}$ with conductance associated to the edge $\{x_k,x_{k+1}\}$ equal to  $c_k:=\e^{-Z_k}$. 
We introduce an external electrical field of intensity $\lambda\in(0,1)$. From a physical point of view, this will result in a modification of the conductances which are given now by
\begin{equation}
\label{eq:Mott_cond}
c_k^{\lambda} := c_k\cdot \e^{ \lambda(x_k + x_{k+1})} \, .
\end{equation}
We will stick to the symbol $c_k^\gl$ when dealing with the conductances of the biased R1M, while we will use $c_k^\gd$ for the BiRC and $c_k^\star$ when dealing with both models at the same time. It will be convenient to look at a projection of $(\tilde Y_n)_{n\in\N}$ on $\Z$: we let  $(Y_n)_{n\geq 0}$  be the random walk on $\Z$ such that $\tilde Y_n := x_{Y_n}$. Studying the asymptotics of the two chains is equivalent, since in the transient case we have the relation $\frac{\tilde Y_n}{Y_n} \to \bbE[Z_0]$ as $n\to\infty$ by the ergodic theorem.

\smallskip

For $k\in\Z$, the probability of jumping from $k$ to $k+1$ for $(Y_n)_{n\in\N}$ and the $\rho_{k}$'s appearing in Theorem \ref{thm:transience} are, respectively,
\begin{align}\label{cucciolo2}
\go_k 
	= \frac{\e^{-(1-\lambda) Z_k } }{ \e^{-(1-\lambda) Z_k } + \e^{-(1+\lambda) Z_{k-1} }}  
	\qquad  \text{and } \qquad  
	\rho_k =  \e^{-(1+\lambda) Z_{k-1} +(1-\lambda) Z_k } \, .
\end{align}
%
%
%
%
%
It follows that $\bbE[\log \rho_0] = - 2 \lambda \bbE[Z_0] <0$,  so that part (i) of Theorem~\ref{thm:transience} guarantees that 
$(X_n)_{n\in\N}$ is transient to the right for any $\lambda>0$.
Part (ii) of the theorem and a straightforward computation also show that the limiting velocity is
\begin{equation}\label{calamaro}
\mathbf{v}(\lambda) = \frac{1}{\bbE[\bar S]},
\qquad \text{with } \quad
\bbE[\bar S] = 1+2 \bbE[\e^{(1-\lambda) Z_0}] \frac{ \bbE[\e^{-(1+\lambda) Z_0}]}{ 1- \bbE[\e^{-2\lambda Z_0}] }\,.
\end{equation}
Hence, $\mathbf{v}(\lambda)>0$ if and only if $\bbE[\e^{(1-\lambda) Z_0}] <+\infty$ (note that this condition is equivalent to the condition for ballisticity for the full-range Mott random walk, cfr.~ \cite[Theorem 1]{FGS16}). We define 
\begin{equation}
\label{def:lambdac}
\lambda_c:= \inf \big\{ \lambda  :\, \bbE[\e^{(1-\lambda) Z_0}] <+\infty \big\} \,, 
\end{equation}
so that $\mathbf{v}(\lambda) =0 $ if $\lambda <\lambda_c$ and $\mathbf{v}(\lambda) >0$ if $\lambda>\lambda_c$. 
Notice that from the definition of $\lambda_c$ it follows that 
$\bbP(Z_0 > t)  =\e^{-(1-\lambda_c) \,t\, (1+o(1))}$. Interestingly, by \eqref{calamaro} we see that $\mathbf{v}$ is continuous at $\lambda=\lambda_c$ if and only if $\bbE[\e^{(1-\lambda_c)Z_0 }] = +\infty$.

\begin{theorem}
\label{thm:Mott1}
For any $\lambda \leq \lambda_c$, define $\ga(\lambda) := \frac{1-\lambda_c}{1-\lambda}$. Then, 
$$  
\lim_{n\to\infty} \frac{\log Y_n}{\log n} 
	= \ga(\lambda) \qquad \bbP \otimes P^\go-a.s.
$$
\end{theorem}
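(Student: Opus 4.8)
The plan is to follow the standard two-sided strategy for sub-ballistic RWRE, but with the trap mechanism adapted to the R1M geometry. Recall that by Theorem~\ref{thm:transience}, if $T_n := \inf\{m : Y_m = n\}$ denotes the hitting time of $n$, then $Y_n$ and $T_n$ are essentially inverse to each other on the log scale, so it suffices to show that $\log T_n / \log n \to 1/\ga(\lambda) = (1-\lambda)/(1-\lambda_c)$ almost surely. Since the walk is transient to the right and nearest-neighbor, $T_n = \sum_{k=0}^{n-1} \tau_k$ where $\tau_k$ is the time spent crossing edge $k$ going forward for the last time, plus the backtracking excursions; the dominant contribution comes from the quenched expected crossing times. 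A standard computation with the reversible measure (or directly via the formula for $\bar S$) gives that the quenched mean time to go from $k$ to $k+1$, starting from $k$, is governed by $\rho$-products, and using \eqref{cucciolo2} one finds that the ``depth'' of the trap at site $k$ is of order $c_k^\lambda / c_{k-1}^\lambda$-type ratios, which after telescoping reduce to $\e^{(1-\lambda) Z_k}$ weighted by the bias decay. The key point is that the expected time to traverse a block is, up to subexponential corrections, $\max_{k \le n} \e^{(1-\lambda) Z_k}$ (the deepest single-edge trap), with the bias $\e^{-2\lambda Z_{k-1}}$-type factors only contributing lower-order corrections because $\lambda < \lambda_c$ forces these exponential moments to diverge precisely from the $Z_k$ tail.

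The upper bound on $T_n$ (lower bound on $Y_n$): I would show that $T_n \le n^{1/\ga(\lambda) + o(1)}$ almost surely. Writing $T_n$ as a sum over the $n$ edges of quenched-mean crossing times $\mu_k$, and using the explicit formula for $\mu_k$ in terms of partial products of $\rho_j$'s, I would bound $\mu_k$ by a geometric-type sum dominated by its largest term, which by \eqref{cucciolo2} is comparable to $\e^{(1-\lambda) Z_k}$ times polynomial factors. Since $\bbP(Z_0 > t) = \e^{-(1-\lambda_c)t(1+o(1))}$, the maximum of $n$ i.i.d.\ copies of $\e^{(1-\lambda)Z_k}$ is $n^{(1-\lambda)/(1-\lambda_c) + o(1)}$, and a union bound controls the probability that any $\mu_k$ substantially exceeds this. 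Then Markov's inequality (or a second-moment / Borel--Cantelli argument along a subsequence $n = 2^j$, filling in by monotonicity) upgrades control of the mean to an a.s.\ bound on $T_n$ itself. One must be slightly careful that the random environment contributions to $\mu_k$ involve the full tail of the partial products $\prod \rho_j$, but because $\bbE[\log\rho_0] = -2\lambda\bbE[Z_0] < 0$ these products decay geometrically and the sum defining $\mu_k$ is finite and controlled by $O(1)$ many leading terms.

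For the lower bound on $T_n$ (upper bound on $Y_n$), I would exhibit, with high probability, a single edge $k^* \le n$ whose crossing time alone is at least $n^{1/\ga(\lambda) - o(1)}$: by the tail asymptotics there is, with probability tending to one, some $k^* \le n$ with $Z_{k^*} \ge \frac{1-o(1)}{1-\lambda_c}\log n$, hence $\e^{(1-\lambda)Z_{k^*}} \ge n^{(1-\lambda)/(1-\lambda_c) - o(1)}$; conditionally on the neighboring conductances being $O(1)$ (which happens with probability bounded away from zero, and can be arranged for some such $k^*$), the walk must spend, with high quenched probability, at least a comparable number of steps oscillating across that edge before passing it, by a geometric-trials / Green's function lower bound. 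Combining with Borel--Cantelli along $n = 2^j$ gives $T_n \ge n^{1/\ga(\lambda) - o(1)}$ a.s., and inverting yields $\log Y_n/\log n \le \ga(\lambda)$. \textbf{The main obstacle} I anticipate is verifying rigorously that the bias terms $\e^{-(1+\lambda)Z_{k-1}}$ appearing in $\rho_k$ genuinely do not shift the exponent — that is, showing the ``effective trap depth'' is governed by the single worst $Z_{k^*}$ and not by some conspiracy of a run of moderately large $Z$'s reinforced by the bias, which is exactly the phenomenon that makes $\ga$ depend on $\lambda$ here (unlike the BiRC case) and so requires a careful large-deviations estimate on the partial products $\prod_{j} \e^{-(1+\lambda)Z_{-j}+(1-\lambda)Z_{-j+1}}$ rather than a soft argument.
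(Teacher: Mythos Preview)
Your plan is correct and follows essentially the same route as the paper: reduce to $\log T_n/\log n \to 1/\ga(\lambda)$, bound $E^\go[T_n]$ for the upper bound via Markov plus Borel--Cantelli along $2^j$, and exhibit deep single-edge traps for the lower bound. The ``main obstacle'' you flag dissolves more easily than you expect: the paper uses a short product-tail lemma (if $X,Y>0$ are independent with $\bbP(X>t)=t^{-\ga+o(1)}$ and $\bbP(Y>t)\le t^{-\ga+o(1)}$, then $\bbP(XY>t)=t^{-\ga+o(1)}$), and since here $Y=\e^{-(1+\lambda)Z_{-1}}\le 1$ trivially satisfies the hypothesis, one obtains $\bbP(\rho_0>t)=t^{-\ga(\lambda)+o(1)}$ with no large-deviations input; the same lemma, combined with a tail estimate for the geometric sums $\sum_{\ell\ge 1}1/c_\ell^\lambda$, handles the full crossing-time/$E^\go[T_n]$ bound, so no ``conspiracy of runs'' can shift the exponent. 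One minor point to budget for: the log-scale inversion $T_n\leftrightarrow Y_n$ is not a consequence of Theorem~\ref{thm:transience} and the paper proves a separate backtracking estimate ($Y_{T_n+k}\ge n-C\log n$ eventually) via effective conductances to justify it.
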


We stress once more that a notable difference with Theorem \ref{thm:Biased1} is that here the scaling $\ga(\cdot)$ depends on the intensity of the bias $\gl$, whereas $\gamin$ does not depend on $\gd$.

\subsection{Discussion about the traps and products of heavy-tail random variables}
\label{sec:product}
In order to give a mathematical sense to the notion of traps, Sinai introduced in \cite{Sinai82} the potential function $V$, canonically associated to the environment $\go$. It can be defined as $V(k): = \sum_{i=1}^{k} \log \rho_i$ for $k\in\Z_+$ (for $k\in \Z_-$ add a minus sign, and $V(0)=0$)  and
it is a powerful and intuitive tool, since ``valleys'' in the potential landscape correspond to traps for the random walk. The notion of valleys as traps has then been extended in \cite{ESZ09} to the case of transient random walks  on $\mathbb{Z}$ (say to $+\infty$). Since in this case $\bbE[\log \rho_0]<0$, one has that the potential  has a negative drift ($V(k) \to -\infty$) and traps correspond to portions where $V$ increases by a large amount.
In particular, when the $\go_i$'s are i.i.d., the potential is a random walk with independent increments and the valleys are associated to its excursions: they are large regions with a large deviation behavior of the sum of $\log \rho_i$, see the discussion in Section~3 of \cite{ESZ09}.
This is in sharp contrast with our framework: in the BiRC (an analogous reasoning can be made for the R1M), the increments of $V$ are strongly correlated, and as a matter of fact we have $V(k) = \log c_0 -2\gd k - \log c_k$, for $k\ge 1$.   Valleys, \textit{i.e.}\ large increases in the potential, will then  be caused by isolated large values of $V(k) -V(k-1) = \log \rho_k$.
For us, it is therefore crucial to understand the tail of the distribution of $\rho_0 =e^{-2\gd} c_{-1}/c_0$.

\smallskip

The following general result goes in this direction and will be useful throughout the paper.
%
From now on, when treating the two models at the same time, we will use $\alpha$ for  $\gamin$ or $\ga(\gl)$ as defined in Theorems \ref{thm:Biased1} and \ref{thm:Mott1} respectively.

\begin{lemma}
\label{prop:producteasy}
Let $X,Y$ be two independent, positive, random variables.
Assume that there is some $\ga>0$ such that, as $t\to\infty$,
\[\bbP(X>t) = t^{-\ga +o(1)} \quad \text{and } \quad \bbP(Y >t) \leq t^{-\ga +o(1)} \, ,\]
\textit{i.e.}\ $Y$ has a lighter tail than $X$.
Then we have that
\[\bbP\big( XY >t \big) = t^{-\ga +o(1)} \quad \text{ as } t\to \infty \, .\]
\end{lemma}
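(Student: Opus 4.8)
The plan is to prove the two bounds $\bbP(XY>t) \geq t^{-\ga+o(1)}$ and $\bbP(XY>t)\leq t^{-\ga+o(1)}$ separately. The lower bound is immediate: fixing any small $\eta>0$, on the event $\{X > t^{1+\eta}\} \cap \{Y > t^{-\eta}\}$ we have $XY>t$, and since $X$ and $Y$ are independent with $\bbP(X>t^{1+\eta}) = t^{-(1+\eta)\ga+o(1)}$ and $\bbP(Y>t^{-\eta}) \to \bbP(Y>0) $ which is a positive constant (or at worst bounded below by a constant once we note $Y$ is positive, so $\bbP(Y>t^{-\eta})\to 1$), we get $\bbP(XY>t) \geq t^{-(1+\eta)\ga+o(1)}$. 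Letting $\eta\downarrow 0$ after taking $\log(\cdot)/\log t$ gives $\liminf \log\bbP(XY>t)/\log t \geq -\ga$.

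For the upper bound I would decompose according to the relative sizes of $X$ and $Y$. Fix $\eta>0$ and write, for the threshold,
\[
\bbP(XY>t) \leq \bbP\big(X > t^{1+\eta}\big) + \bbP\big(Y > t^{\eta}\big) + \bbP\big(XY>t,\ X\le t^{1+\eta},\ Y\le t^{\eta}\big).
\]
The first term is $t^{-(1+\eta)\ga+o(1)}$ and the second is at most $t^{-\eta\ga+o(1)}$, both negligible compared to $t^{-\ga}$ once we check—actually the second is too small, so the real work is the third term. There I would slice the range of $Y$ dyadically: on $\{2^{j} < Y \le 2^{j+1}\}$ with $2^{j}\le t^{\eta}$, the event $XY>t$ forces $X > t\,2^{-j-1}$, so by independence
\[
\bbP\big(XY>t,\ Y\le t^{\eta}\big) \leq \sum_{j\,:\,2^{j}\le t^\eta} \bbP\big(2^{j}<Y\le 2^{j+1}\big)\,\bbP\big(X > t\,2^{-j-1}\big).
\]
Each factor $\bbP(X>t 2^{-j-1})$ is at most $(t2^{-j-1})^{-\ga+o(1)} \le t^{-\ga+o(1)} 2^{(j+1)\ga}$ uniformly for $2^j \le t^\eta$ (the $o(1)$ is uniform since $t2^{-j-1}\ge t^{1-\eta}/2\to\infty$), and $\bbP(2^j<Y\le 2^{j+1})\le \bbP(Y>2^j) \le 2^{-j\ga+o(1)}$. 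So each summand is bounded by $t^{-\ga+o(1)} 2^{(j+1)\ga - j\ga + o(j)} = t^{-\ga+o(1)} 2^{o(j)}$, and summing over the $O(\log t)$ values of $j$ costs only a factor $(\log t) \cdot t^{o(1)} = t^{o(1)}$, giving the desired $t^{-\ga+o(1)}$ bound.

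The main obstacle—and the point requiring care—is controlling the $o(1)$ error terms \emph{uniformly} over the dyadic blocks. The hypothesis gives $\bbP(X>s) = s^{-\ga+o(1)}$ only as $s\to\infty$, so when I write $\bbP(X>t2^{-j-1}) \le (t2^{-j-1})^{-\ga+\gep}$ I must ensure the argument $t2^{-j-1}$ stays large, which is why the cutoff $Y\le t^\eta$ (hence $2^j\le t^\eta$, hence $t2^{-j-1}\ge \tfrac12 t^{1-\eta}$) is essential. Similarly, the slowly-varying-type error in $\bbP(Y>2^j)$ accumulates over $j\le \eta\log_2 t$ blocks; since each contributes a subpolynomial factor $2^{\gep j}$ this is absorbed into $t^{O(\eta)}\cdot t^{o(1)}$, and a final limit $\eta\downarrow 0$ removes it. Once both bounds are in place, taking $\log/\log t$ and sending $\eta\to 0$ yields $\lim_{t\to\infty}\log\bbP(XY>t)/\log t = -\ga$, which is the claim.
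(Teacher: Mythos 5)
Your lower bound is fine: restricting to $\{X>t^{1+\eta}\}\cap\{Y>t^{-\eta}\}$ and using independence and $\bbP(Y>t^{-\eta})\to 1$ gives $\liminf \log\bbP(XY>t)/\log t\geq -\ga$, essentially the same idea as the paper (which uses a fixed quantile of $Y$ instead of $t^{-\eta}$). The gap is in the upper bound, in the second term of your decomposition. Under the hypothesis $\bbP(Y>s)\leq s^{-\ga+o(1)}$ alone, the term $\bbP(Y>t^{\eta})$ can only be bounded by $t^{-\eta\ga+o(1)}$, which for $\eta<1$ is \emph{larger} than the target $t^{-\ga+o(1)}$ — and you are committed to $\eta<1$ (indeed $\eta\downarrow 0$ at the end), since your uniformity claim in the dyadic part rests on $t2^{-j-1}\geq \tfrac12 t^{1-\eta}\to\infty$. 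So what your argument actually proves is $\bbP(XY>t)\leq t^{-\eta\ga+o(1)}$ for each fixed $\eta<1$, which does not yield the claim. Your parenthetical ``actually the second is too small'' is precisely the problem, and passing to the third term does not repair it: the range $Y\in(t^{\eta},t]$ must be analysed, not discarded by a union bound.

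The repair is short and lands on the paper's proof. Note first that the hypothesis on $X$ gives a genuinely uniform polynomial bound: for every $\gep>0$ there is a constant $c_\gep$ with $\bbP(X>x)\leq c_\gep x^{-\ga+\gep}$ for \emph{all} $x\geq 1$ (choose $x_0(\gep)$ from the $o(1)$ and absorb $[1,x_0]$ into the constant). Then cut only at $Y>t$, where $\bbP(Y>t)\leq t^{-\ga+o(1)}$ already suffices, and on $\{Y\leq t\}$ (so $t/Y\geq 1$) condition on $Y$: $\bbP(XY>t,\,Y\leq t)=\bbE\big[\bbP(X>t/Y\mid Y)\ind_{\{Y\leq t\}}\big]\leq c_\gep t^{-\ga+\gep}\,\bbE\big[Y^{\ga-\gep}\ind_{\{Y\leq t\}}\big]\leq C_\gep t^{-\ga+\gep}$, since $\bbE[Y^{\ga-\gep}]<\infty$ by the tail assumption on $Y$; letting $\gep\downarrow 0$ finishes. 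Equivalently, your dyadic slicing works if you run it over all blocks $2^j\leq t$ using the uniform bound above: with the same $\gep$ in both tails each block contributes at most $c_\gep C_\gep\, 2^{\ga-\gep}\, t^{-\ga+\gep}$ (the $j$-dependence cancels), and the $O(\log t)$ blocks cost only a factor $t^{o(1)}$, so no $\eta$ and no uniform-$o(1)$ issue arise at all.
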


\begin{proof}
For the lower bound, let $y_0 = \inf \{ y :\, \bbP(Y\leq y) \geq 1/2\}$; then 
\begin{align*}
  \bbP(XY > t)  \geq \bbP( Y \leq y_0 ) \bbP\big( X >  t/y_0 \big) = t^{-\ga +o(1)} \, .
\end{align*}

\noindent
For the upper bound, we fix some $\gep \in(0,\ga)$, and we write
\begin{align*}
\bbP(XY>t) 
	& \leq \bbP\big( Y > t  \big) + \bbE\big[ \bbP\big( X  >  t /Y  \mid Y \big)  \ind_{\{Y \leq t \}}\big] \\
	& \leq t^{-\ga +o(1)} +  c_{\gep} t^{-\ga+\gep}\bbE\big[ Y ^{ \ga -\gep} \ind_{\{ Y \leq t\}} \big]\, ,
\end{align*}
where we used the fact that there is a constant $c_{\gep}$ such that $\bbP(X > x) \leq c_{\gep} x^{-\ga +\gep}$ for all $x\geq 1$.
Then, because of our assumption on $Y$, we easily get that for all $\gep>0$, $\bbE[Y^{\ga-\gep}] <+\infty$.
Therefore, we get that there is a constant $C_{\gep}$ such that
\[\bbP(XY >t) \leq  t^{-\ga + o(1)} +  C_\gep t^{-\ga + \gep}\, ,\]
so that $\bbP( XY >t) \leq t^{-\ga +o(1)}$, since $\gep$ is arbitrary.
\end{proof}

Thanks to this lemma, we are able to obtain the tail of $\rho_0$:  we have that for both models
\begin{equation}
\label{eq:tailrho}
\bbP(\rho_0 > t) = t^{-\alpha +o(1)}\, .
\end{equation}
Indeed, for the BiRC we have that $\rho_0$ is the product of two independent random variables: $X:= e^{-2\gd} c_{-1}$, which has 
a tail $\bbP(X>t) = t^{-\ga_\infty+o(1)}$, and $Y:= 1/c_0$, which has 
a tail $\bbP(Y>t) = t^{-\ga_0+o(1)}$ (we might have to exchange the role of $X$ and $Y$ to properly apply Lemma~\ref{prop:producteasy}).
Analogously, for the R1M, $\rho_0$ is the product of 
$X:= \e^{(1-\lambda)Z_0}$ which has a tail $\bbP(X>t) = \bbP(Z_0 > \frac{1}{1-\lambda} \log t) = \exp\big\{ -  \frac{1-\lambda_c}{1-\lambda} \log t \,(1+o(1))\big\}=t^{-\ga(\gl)+o(1)}$, and $Y := \e^{-(1+\lambda) Z_{-1}}$ which is not larger than $1$ (and therefore satisfies $\bbP(Y>t) \leq t^{-\alpha(\gl) +o(1)}$).
Note that this proof gives an indication on the easiest way to create a large trap, \textit{i.e.}\ a large $\log \rho_i$: the tail of $\rho_0$ comes from having either a large $c_{-1}$ (with $c_0$ of order one), or a small $c_0$ (with $c_{-1}$ of order one), depending on which one is the easier.

\begin{rem}\rm
In the case where $c_0$ has a regularly varying tail at $0$ and $+\infty$, as in~\eqref{regvar}, \cite{EG80} gives that $\rho_0=c_{-1}/c_0$ has a regularly varying tail with exponent $\gamin = \min (\ga_0,\ga_{\infty})$, and  \cite[Corollary 5]{C86} provides the sharp behavior of $\bbP(\rho_0>t)$ as $t\to\infty$.
Let us stress here that in the case $\ga_0=\ga_{\infty}$, the main contribution to $\bbP(\rho_0>t)$ will come from a combination of having $c_{-1}$ large and $c_{0}$ small, therefore suggesting a new type of trap, different from those of Figure~\ref{figure1}. As an example, if $\bbP(c_0>t)\sim t^{-\gamin}$ and $\bbP(c_0<\gep) \sim \gep^{-\gamin}$ in \eqref{regvar}, a straightforward calculation gives that $\bbP\big( \rho_0 >t \big) \sim (\log t) t^{-\gamin}$, and the main contribution comes from all possibilities of having $c_{-1}\asymp t^a$ and $c_0  \asymp t^{a-1}$ with $a\in [0,1]$.
\end{rem}

\section{Proof of Theorems \ref{thm:Biased1} and \ref{thm:Mott1}}

\subsection{A fundamental preliminary result}

%

A central tool for the study of the walks are the hitting times
\begin{equation}
T_n := \inf \{ k \, , \, X_k=n\}\, .                   
\end{equation}
We will also use the notation $T_n$ for the hitting times of the R1M $(Y_n)_{n\in\N}$. Understanding the behavior of $T_n$ is the key to finding the right scaling for  $X_n$ (respectively, $Y_n$). In fact, the following proposition, in loose terms, shows that the increasing map $n\mapsto T_n$ is the inverse of the map $k\mapsto X_k$, up to an error of at most a constant times $\log n$. Corollary \ref{settimanaccia} shows the relation between the asymptotics of $T_n$ and that of the position of the walker.

\begin{proposition}\label{kingofthenorth}
There exists a constant $C>0$ such that, for $\P$--almost every $\go$ we have that, $P^\go$--a.s., there exists $n_0\in\N$ such that, for all $n\geq n_0$, 
\begin{align*}
X_{T_n+k}\geq n-C\,\log n\,,
\end{align*}
The same result holds for the R1M process $(Y_n)_{n\in\N}$.
\end{proposition}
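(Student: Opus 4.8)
The statement says that once the walk has reached site $n$ for the first time (at time $T_n$), it never again drops below $n - C\log n$. The natural way to prove this is a Borel--Cantelli argument controlling the probability that the walk, started from $n$, backtracks by more than $C\log n$ before... well, ever. First I would observe that it suffices to show: for $\bbP$-a.e.\ $\go$,
\[
\sum_{n} P^\go\big( \exists k \geq 0 : X_{T_n + k} < n - C\log n \big) < \infty,
\]
since then Borel--Cantelli (for the quenched measure, for a.e.\ $\go$) gives the existence of $n_0$. By the strong Markov property at time $T_n$, the probability inside the sum equals the probability that a walk started at $n$ ever reaches $n - \lfloor C\log n\rfloor$; since the walk is transient to the right, this is $\bbP$-a.s.\ strictly less than $1$, and in fact it decays. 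Concretely, for a nearest-neighbor walk in environment $\go$, the probability of ever going from $n$ down to $n - m$ is a product/gambler's-ruin-type quantity; the cleanest bound is that the probability of moving from $j$ to $j-1$ before returning to $j+1$, iterated, gives that the probability of hitting $n-m$ from $n$ is at most $\prod$ of escape probabilities, and standard RWRE estimates (see \cite{TZ04}) bound this by $\exp(-c \, (V(n) - \min_{n-m \le i \le n} V(i)))$ or, more crudely, by a bound of the form $C' e^{-c' m}$ on an event of full $\bbP$-probability coming from the ergodic theorem applied to $\bbE[\log\rho_0] < 0$.

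The key quantitative input is therefore a lower bound, valid for $\bbP$-a.e.\ $\go$ and all large $n$, on the potential difference $V(n) - V(n-m)$ when $m \asymp \log n$. Since $V(k+1) - V(k) = \log \rho_k$ and $\bbE[\log\rho_0] = -2\gd < 0$ (resp.\ $-2\gl\bbE[Z_0] < 0$), the ergodic theorem gives $V(n) - V(n-m) \leq -\gd m$ for all $n$ large enough provided $m \to \infty$; but here I need a uniform-in-the-window statement. The standard device is: the probability that the walk from $n$ backtracks to $n-m$ is bounded above by $\sum_{i=n-m}^{n-1} e^{-(V(i) - \min_{j \le i} V(j))}$-type sums, but even more simply, it is at most $(\text{something}) \cdot e^{-(V(n) - \max_{n-m\le i\le n-1} V(i))}$ is not quite right; rather the relevant bound is $P^\go_n(\text{hit } n-m) \leq \sum_{k \ge 0}$ of return probabilities and reduces to controlling $\max_{0 \le \ell \le m}(V(n-\ell) - V(n))$. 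I would phrase it as: choosing $C$ large relative to $1/\gd$ (resp.\ $1/(\gl\bbE[Z_0])$) and using that by a Borel--Cantelli / maximal-inequality argument for the random walk $V$ with negative drift, $\bbP$-a.s.\ for all large $n$ one has $V(n-\ell) - V(n) \geq \tfrac{\gd}{2}\ell$ fails only for $\ell$ bounded, hence $\min_{\ell \le C\log n}(V(n) - V(n-\ell)) \geq c \log n - O(1)$; then the quenched backtracking probability is $\le n^{-2}$, say, which is summable.

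The main obstacle, I expect, is making the potential estimate uniform over the position $n$ and over the backtracking depth simultaneously, i.e.\ controlling $\min_{n} \min_{1 \le \ell \le C\log n}\big(V(n) - V(n-\ell)\big)$ from below by $c\log n$ for a.e.\ $\go$ and all large $n$. Because $V$ has strongly correlated increments in these models (as the authors emphasize: $V(k) = \log c_0 - 2\gd k - \log c_k$ for the BiRC), one cannot quote i.i.d.\ random-walk theory directly; instead one uses $V(n) - V(n-\ell) = 2\gd\ell + \log c_{n-\ell} - \log c_n$, so the estimate reduces to showing $\log c_{n-\ell} - \log c_n \geq -\gd\ell + o(\log n)$ uniformly, which in turn follows from $\max_{|j| \le n^2}|\log c_j| = o(\log n)$... no — rather from $\max_{|j|\le n} \log c_j \le (\gamin^{-1} + \gep)\log n$ and $\min_{|j| \le n}\log c_j \ge -(\gamin^{-1}+\gep)\log n$ by the tail assumption \eqref{hyp:tail1} and Borel--Cantelli, so that $|\log c_{n-\ell} - \log c_n| \le C''\log n$ with $C''$ explicit; choosing $C$ a suitable multiple of $C''/\gd$ then makes $V(n) - V(n-\ell) > 0$ strictly, and a slightly larger $C$ makes it grow, yielding summable quenched backtracking probabilities. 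The R1M case is handled identically with $c_k = e^{-Z_k}$ and the exponential tail of $Z_0$ replacing \eqref{hyp:tail1}. I would close by noting that the constant $C$ produced this way is deterministic, as required, and that the passage from "$X_{T_n+k} \ge n - C\log n$ for all $k$" to the statement is immediate since $X_{T_n} = n$.
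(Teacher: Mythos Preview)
Your plan is correct and follows the same architecture as the paper: reduce to summability of the quenched backtracking probability $P^\go_n(\tau_{n-C\log n}<\infty)$ and apply Borel--Cantelli. The paper phrases the key bound via effective conductances, obtaining
\[
P^\go(A_n)\;\le\; c_{f(n)}^\star\sum_{j\ge n}\frac{1}{c_j^\star}
\;=\; \e^{-2\gd(n-f(n))}\,c_{f(n)}\sum_{j\ge 0}\frac{\e^{-2\gd j}}{c_{n+j}}
\;=:\; \e^{-2\gd C\log n}\,K_n(\go),
\]
and then controls the tail of $K_n$ as a product of two independent factors using Lemma~\ref{prop:producteasy} together with Lemma~\ref{lem:tailXY} (the tail of the geometric sum $\sum_j \e^{-2\gd j}/c_{n+j}$), yielding $K_n\le n^{(1+\gep)/\gamin}$ a.s.\ eventually. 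You instead bound the individual terms $|\log c_j|\le(\gamin^{-1}+\gep)\log n$ for $j\le n$ via \eqref{hyp:tail1} and a (dyadic) Borel--Cantelli, which gives the cruder estimate $K_n\le n^{2/\gamin+o(1)}$ but avoids both auxiliary lemmas; either way one gets $P^\go(A_n)\le n^{-2\gd C+O(1)}$, summable for $C$ large. For the R1M the paper argues slightly differently, using the law of large numbers for $x_j-x_{f(n)}$ and a crude bound $Z_j\le (j-f(n))\gl\bbE[Z_0]/4$ rather than the potential picture, but again the mechanism is the same as yours. One small slip: your formula $V(n)-V(n-\ell)=2\gd\ell+\log c_{n-\ell}-\log c_n$ has the wrong sign on the drift (it should be $-2\gd\ell$), though this does not affect the argument since the relevant quantity is $V(i)-V(n-m)$ for $i\ge n$, which indeed carries the factor $\e^{-2\gd m}$.
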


\begin{proof}
We set $f(n):=n-C\,\log n$, with $C>0$ to be determined later on. We call $A_n:=\{W_k< f(n)\,,\;\mbox{for some } k \geq T_n\}$, where $W_k$ can be taken equal to $X_k$ or $Y_k$, and control 
\begin{align}\label{ponchielli}
P^\go(A_n)
	&\leq P^\go_n(\tau_{f(n)}<+\infty)
	=\lim_{M\to\infty} \frac{\Ceff(\{n\}\leftrightarrow\{f(n)\})}{\Ceff(\{n\}\leftrightarrow\{f(n)\}\cup \{M\})}
	\leq c_{f(n)}^\star \sum_{j=n}^\infty \frac{1}{c_j^\star}\,.
\end{align}
where $P^\go_n$ is the law of the random walk in random environment $\go$, starting from $n$.
Here $\tau_j$ is the first time the walk hits $j\in\N$, while $\Ceff(A\leftrightarrow B)$ indicates the effective conductance between set $A$ and set $B$, that is $\Ceff(A \leftrightarrow B):=\inf\{\sum_{k\in\Z}c_k(f(k+1)-f(k))^2\,,\,f|_{A}=0\,,\,f|_B=1\}$. For the equality in \eqref{ponchielli} we have used the well known formula for walks among conductances (see, \textit{e.g.}, \cite[Exercise 2.36]{LP16}) 
\begin{align}\label{useful}
P^\go_k(\tau_A<\tau_B)
	=\frac{C_{\rm eff}(\{k\} \leftrightarrow A)}{C_{\rm eff}(\{k\}\leftrightarrow A\cup B)}\,,
\end{align}
and then we have used the explicit expression of $\Ceff$ for conductances in series and in parallel (detailed later on in formulas \eqref{effettivamente1} and \eqref{effettivamente2}).

\smallskip

In the BiRC model case $c_j^\star=c_j^\delta$ and \eqref{ponchielli} becomes
\begin{align*}
P^\go(A_n)
	\leq \e^{-2\gd (n-f(n))}c_{f(n)}\sum_{j=0}^\infty \e^{-2\gd j}\frac{1}{c_{n+j}}
	=:\e^{-2\gd (n-f(n))}K_n(\omega)\,.
\end{align*}
We use Lemma~\ref{prop:producteasy} with $X=c_{f(n)}$ and $Y=\sum_{j=0}^\infty \e^{-2\gd j}/{c_{n+j}}$ (by Lemma~\ref{lem:tailXY} we have $\P(Y>t)= t^{-\ga_0+o(1)}$), or the other way around if $\ga_\infty\geq \ga_{0}$:
 we obtain $\P(K_n>t)= t^{-\gamin+o(1)}$. By a  Borel-Cantelli argument it follows that for any $\varepsilon >0$, $\P$--a.s., there exists an $m_0=m_0(\go)$ such that $K_m\leq m^{(1+\varepsilon)/\gamin}$  for each $m\geq m_0$. As a consequence, $\P$--a.s.~there exists $n_0$ such that, for each $n\geq n_0$, we have $P^\go(A_n)\leq n^{(1+\varepsilon)/\gamin}\e^{-2\gd (n-f(n))}=n^{(1+\varepsilon)/\gamin-2\gd C}$. Since this probability is summable for $C$ big enough, again by a Borel-Cantelli argument we have that $A_n$ happens only a finite number of times and the claim follows for $(X_n)_{n\in\N}$.
 
 \smallskip
 
For the R1M, one has $c_j^\star=c_j^\gl= \e^{-Z_j+ \lambda(x_j + x_{j+1})} $ in \eqref{ponchielli} and therefore
\begin{align*}
P^\go(A_n)
	\leq \e^{-Z_{f(n)}+\gl(x_{f(n)}+x_{f(n)+1})}\sum_{n=0}^\infty \frac{1}{\e^{-Z_j+\gl(x_j+x_{j+1})}}
	\leq  \sum_{j=n}^\infty \e^{Z_j-\gl (x_j-x_{f(n)})}.
\end{align*}
Now, $\P$--a.s.~there exists $n_0=n_0(\go)$ such that, for each $n\geq n_0$ and  
 $j\geq n$, we have $x_j-x_{f(n)}\geq (j-f(n))\E[Z_0]/2$ and also $Z_j\leq (j-f(n))\gl\E[Z_0]/4$. It follows that $P^{\go}(A_n)\leq n^{-C \gl\E[Z_0]/4}$, and for $C$ big enough this probability is summable. It follows that $A_n$ happens only a finite number of times and the claim follows for $(Y_n)_{n\in\N}$, too.
\end{proof}
An easy and important  consequence of Proposition \ref{kingofthenorth} is the following corollary, which says that in order to prove Theorems~\ref{thm:Biased1}-\ref{thm:Mott1}, we simply need to focus on $T_n$, and prove
\begin{equation}
\label{trenone}
\lim_{n\to\infty} \frac{\log T_n}{\log n} 
	=\frac{1}{\ga} \qquad \bbP\otimes P^{\go}-a.s.
\end{equation}
for both models (recall that $\alpha$ indicates either $\gamin$ or $\alpha(\gl)$).

\begin{cor}\label{settimanaccia}
For $\P$--almost every $\go$ we have that, $P^\go$--a.s.,
\begin{align}\label{panda1}
 \lim_{n\to\infty} \frac{\log T_n}{ \log n}  =  \frac1{a}
 \qquad
 \Longleftrightarrow 
\qquad
\lim_{n\to\infty} \frac{\log X_n}{ \log n} =a
\end{align}
with $a:\Omega\times(\Z^d)^{\otimes\N}\to\R_+$.
The same result holds for the R1M process $(Y_n)_{n\in\N}$.
\end{cor}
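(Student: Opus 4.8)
\medskip
\noindent\textbf{Proof proposal.} The plan is to deduce the equivalence \eqref{panda1} from the elementary fact that the hitting‑time sequence $(T_n)_{n\ge 1}$ and the position process $(X_m)_{m\ge 0}$ are generalized inverses of one another, up to the $O(\log n)$ correction supplied by Proposition~\ref{kingofthenorth}. Fix $\go$ in the full‑$\P$‑measure set where Proposition~\ref{kingofthenorth} holds and a trajectory in the corresponding full‑$P^\go$‑measure set; by Theorem~\ref{thm:transience}(i) we may also assume the walk is transient to $+\infty$. Then $T_n<\infty$ for every $n$, the map $n\mapsto T_n$ is strictly increasing with $T_n\to\infty$, and since the walk is nearest‑neighbour and starts at $0$ the discrete intermediate value theorem gives $X_{T_n}=n$, $X_{T_n-1}=n-1$, and $X_k\le n-1$ for all $k<T_n$. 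Writing $M_m:=\max_{0\le k\le m}X_k$ for the running maximum, one has $M_m\le m$, $M_m\to\infty$, $X_m\le M_m$, and $M_m=\max\{n\ge 0:\,T_n\le m\}$ (the set of sites visited by time $m$ is an interval), hence
\begin{equation}\label{eq:TMsandwich}
T_{M_m}\le m< T_{M_m+1}\quad\text{for all }m\ge 0,\qquad M_{T_n-1}\le n-1< n\le M_{T_n}.
\end{equation}
The only genuinely probabilistic input is Proposition~\ref{kingofthenorth} applied at level $n=M_m$ (legitimate once $M_m$ exceeds the threshold $n_0$, hence for all large $m$, since $m\ge T_{M_m}$), which yields
\begin{equation}\label{eq:XM}
M_m-C\log M_m\le X_m\le M_m\qquad\text{for all large }m .
\end{equation}

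\medskip
The core step is to establish, for this fixed $\go$ and trajectory, the pair of identities
\begin{equation}\label{eq:inverseidentities}
\limsup_{m\to\infty}\frac{\log X_m}{\log m}=\frac{1}{\displaystyle\liminf_{n\to\infty}\frac{\log T_n}{\log n}},\qquad
\liminf_{m\to\infty}\frac{\log X_m}{\log m}=\frac{1}{\displaystyle\limsup_{n\to\infty}\frac{\log T_n}{\log n}},
\end{equation}
with the conventions $1/0=+\infty$, $1/(+\infty)=0$ (recall $T_n\ge n$, so $\liminf$ and $\limsup$ of $\log T_n/\log n$ lie in $[1,+\infty]$). By \eqref{eq:XM} one has $\log X_m=\log M_m+o(\log m)$, so in \eqref{eq:inverseidentities} $X_m$ may be replaced by $M_m$; for the $M_m$‑version I would take logarithms in \eqref{eq:TMsandwich}, divide by $\log M_m$ (resp.\ by $\log M_m\sim\log(M_m+1)$), and pass to $\liminf/\limsup$ using $M_m\to\infty$, which gives ``$\le$'' in the first identity and ``$\ge$'' in the second; the reverse inequalities follow by testing $\log X_m/\log m$ along the subsequences $m=T_n$ and $m=T_n-1$, where $X_{T_n}=n$ and $X_{T_n-1}=n-1$ make the ratio equal to $\log n/\log T_n$ and $\log(n-1)/\log(T_n-1)$. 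This is a routine calculus‑of‑inverse‑functions computation; the one place where \eqref{eq:XM}, hence Proposition~\ref{kingofthenorth}, is actually used is the lower bound $\liminf_m\log X_m/\log m\ge 1/\limsup_n\log T_n/\log n$, which rules out the a priori possibility that $X_m$ wanders far below its running maximum between successive record times.

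\medskip
Granting \eqref{eq:inverseidentities}, the equivalence \eqref{panda1} is immediate: $\frac{\log X_m}{\log m}\to a$ iff the two quantities in \eqref{eq:inverseidentities} both equal $a$, iff $\liminf_n\frac{\log T_n}{\log n}=\limsup_n\frac{\log T_n}{\log n}=\frac1a$, iff $\frac{\log T_n}{\log n}\to\frac1a$. Since this is carried out for $\P\otimes P^\go$‑a.e.\ pair $(\go,\text{trajectory})$ and uses nothing about the value of $a$, it proves the stated equivalence for the (possibly environment‑ and path‑dependent) limit $a$. The argument for the R1M walk $(Y_n)_{n\in\N}$ is verbatim the same: $(Y_n)$ is a nearest‑neighbour walk on $\Z$ transient to $+\infty$, its hitting times obey \eqref{eq:TMsandwich}, and Proposition~\ref{kingofthenorth} was established for $(Y_n)$ as well. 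The main — and essentially only non‑formal — obstacle is the lower bound in \eqref{eq:XM}; since Proposition~\ref{kingofthenorth} is already in hand, what remains is bookkeeping with logarithms.
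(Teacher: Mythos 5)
Your proposal is correct and follows essentially the same route as the paper: both rest on Proposition~\ref{kingofthenorth} together with the sandwich $T_{k_n}\le n<T_{k_n+1}$ (your $M_m$ is exactly the paper's $k_n$), plus testing along the subsequence $m=T_n$ where $X_{T_n}=n$. Your splitting into the two limsup/liminf inverse identities is just a slightly more symmetric bookkeeping of the same argument, so there is nothing substantive to change.
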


\begin{proof}
Throughout the proof we will use the fact that $T_n\to\infty$ almost surely as $n\to\infty$, which comes from the transience to the right of the walk. As in \cite[Lemma 2.1.17]{TZ04}, for $n\in\N$ we define $k_n$ to be the unique integer such that $T_{k_n}\leq n<T_{k_n+1}$ and note that $k_n\to\infty$. 
We first prove ``$\Rightarrow$''.
By Proposition \ref{kingofthenorth} we have that, for $n$ big enough, $ k_n-C\log k_n\leq X_n< k_n+1$. Hence, almost surely,
\begin{align*}
a=\liminf_{n\to\infty} \frac{\log(k_n-C\log k_n)}{\log T_{k_n+1}}
	&\leq\liminf_{n\to\infty} \frac{\log X_n}{ \log n}\\
	&\leq \limsup_{n\to\infty} \frac{\log X_n}{ \log n}
	\leq\limsup_{n\to\infty} \frac{\log (k_n+1)}{ \log T_{k_n}}
	=a\,.
\end{align*}
The direction ``$\Leftarrow$'' of \eqref{panda1} is easily proved by noticing that, almost surely,
\begin{align*}
\frac 1a
	=\liminf_{n\to\infty}\frac{\log T_n}{\log X_{T_n}}
	=\liminf_{n\to\infty}\frac{\log T_n}{\log n}
	\leq\limsup_{n\to\infty}\frac{\log T_n}{\log n}
	=\limsup_{n\to\infty}\frac{\log T_n}{\log X_{T_n}}
	=\frac 1a\,.\qedhere
\end{align*}
\end{proof}

\subsection{Lower bound in \eqref{trenone}}
Firstly we show that there are many traps deeper than $n^{1/\ga -\gep}$  between $0$ and $n$ with high probability; secondly we show how to use this fact to conclude that $\lim_{n\to\infty}\log T_n/\log n\geq 1/\ga$.

\smallskip
\noindent\textbf{Step 1:} {\it There are many deep traps.}
From the tail probability of $\rho_0$ given in \eqref{eq:tailrho} and the fact that the $\rho_k$ have only a range-two dependence, we get for both models the following lemma, as a standard application of the Borel-Cantelli lemma.
\begin{lemma}
\label{lem:maxBC}
For every $\gep>0$ we get that, $\bbP$-a.s., there exists some $n_{0}(\gep,\go)$ such that, for any $n \geq n_0$ the events
$$
A_n=A_n(\gep):=\Big\{{\rm Card}\big\{ 1\leq k\leq n :\ \rho_k > n^{1/\alpha -\gep}\big\} 
	\geq 2/\gep \Big\}\, 
$$
are verified.
\end{lemma}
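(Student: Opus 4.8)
The plan is to apply a second-moment / variance argument (Paley--Zygmund or Chebyshev) to the counting random variable
\[
N_n := {\rm Card}\big\{ 1 \leq k \leq n :\ \rho_k > n^{1/\alpha - \gep}\big\} = \sum_{k=1}^n \ind_{\{\rho_k > n^{1/\alpha-\gep}\}},
\]
combined with Borel--Cantelli along the subsequence $n = 2^m$ and monotonicity to fill in the gaps. First I would record, from \eqref{eq:tailrho}, that $p_n := \bbP(\rho_k > n^{1/\alpha - \gep}) = n^{-\alpha(1/\alpha - \gep) + o(1)} = n^{-1 + \alpha\gep + o(1)}$, so that $\bbE[N_n] = n p_n = n^{\alpha\gep + o(1)} \to \infty$; in particular for $n$ large $\bbE[N_n] \geq n^{\alpha\gep/2} \gg 2/\gep$. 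Next I would bound the variance: since the sequence $(\rho_k)_k$ has only range-two dependence, the indicator variables $\ind_{\{\rho_k > n^{1/\alpha-\gep}\}}$ and $\ind_{\{\rho_j > n^{1/\alpha-\gep}\}}$ are independent whenever $|k - j| \geq 3$, hence
\[
{\rm Var}(N_n) \leq \sum_{k=1}^n \sum_{j :\ |j-k| \leq 2} \bbE\big[\ind_{\{\rho_k > n^{1/\alpha-\gep}\}} \ind_{\{\rho_j > n^{1/\alpha-\gep}\}}\big] \leq 5 \sum_{k=1}^n p_n = 5\, \bbE[N_n],
\]
using the trivial bound that the joint probability is at most $p_n$ (it is $\leq \min$ of the two marginals).

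Then I would apply Chebyshev: for $n$ large enough that $\bbE[N_n] \geq 4/\gep$ (so that $2/\gep \leq \tfrac12 \bbE[N_n]$),
\[
\bbP\big(N_n < 2/\gep\big) \leq \bbP\big(|N_n - \bbE[N_n]| > \tfrac12 \bbE[N_n]\big) \leq \frac{4\,{\rm Var}(N_n)}{\bbE[N_n]^2} \leq \frac{20}{\bbE[N_n]} \leq 20\, n^{-\alpha\gep/2}.
\]
Along the subsequence $n_m = 2^m$ this is summable in $m$ (since $2^{-m\alpha\gep/2}$ is a convergent geometric series), so by Borel--Cantelli, $\bbP$-a.s.\ there is $m_0(\gep,\go)$ with $N_{2^m} \geq 2/\gep$ for all $m \geq m_0$. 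Finally, to pass from the subsequence to all $n$: for general $n$, pick $m$ with $2^m \leq n < 2^{m+1}$; then the threshold satisfies $n^{1/\alpha - \gep} \geq (2^m)^{1/\alpha - \gep}$ (monotone in $n$, assuming $1/\alpha - \gep > 0$, which we may since only small $\gep$ matters and $\alpha \in [0,1]$ forces $1/\alpha \geq 1$), hence $\{\rho_k > n^{1/\alpha-\gep}\} \subseteq \{\rho_k > (2^m)^{1/\alpha-\gep}\}$... wait, this goes the wrong way. The correct move is to use $N_n \geq N^{(m)}$ where $N^{(m)} := {\rm Card}\{1 \leq k \leq 2^m :\ \rho_k > (2^{m+1})^{1/\alpha - \gep}\}$: indeed for $2^m \leq n < 2^{m+1}$ we have both $n \geq 2^m$ (so the range $1 \leq k \leq n$ contains $1 \leq k \leq 2^m$) and $n^{1/\alpha - \gep} \leq (2^{m+1})^{1/\alpha - \gep}$ (so the threshold in $N_n$ is no larger), giving $N_n \geq N^{(m)}$. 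Running the identical second-moment estimate for $N^{(m)}$ (whose mean is $2^m \cdot (2^{m+1})^{-1+\alpha\gep+o(1)} = 2^{m\alpha\gep + o(1)} \to \infty$) and Borel--Cantelli in $m$ then gives $N^{(m)} \geq 2/\gep$ eventually, hence $N_n \geq 2/\gep$ for all $n$ large, which is the claim.

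The main obstacle is really just the bookkeeping in the last paragraph: one must set up the auxiliary sequence $N^{(m)}$ so that both the index range and the threshold move in the favorable direction simultaneously, rather than naively comparing $N_n$ to $N_{2^m}$, which does not directly work because enlarging $n$ enlarges the range (good) but also raises the threshold (bad). Everything else is a routine application of Chebyshev plus the range-two independence; the tail input $\bbP(\rho_0 > t) = t^{-\alpha + o(1)}$ is exactly \eqref{eq:tailrho}, already established via Lemma~\ref{prop:producteasy}. One minor point to state carefully is that the $o(1)$ in the exponent is as $t \to \infty$, so for $n$ large the bound $p_n \leq n^{-1 + 2\alpha\gep}$ and $p_n \geq n^{-1 + \alpha\gep/2}$ (say) hold, which is all that is needed; no uniformity beyond that is required.
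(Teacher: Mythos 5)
Your argument is correct, but it follows a genuinely different route from the paper. The paper bounds $\bbP(A_n^c)$ directly: it first splits $(\rho_k)$ into the even- and odd-indexed subsequences to get exact independence, then observes that on $A_n^c$ there is a set of at most $2/\gep$ exceptional indices outside of which every $\rho_k$ lies below the threshold, so a union bound over that set gives $\bbP(A_n^c)\leq n^{2/\gep}\,\bbP(\rho_0\leq n^{1/\alpha-\gep})^{n-2/\gep}\leq \exp\{-n^{\alpha\gep+o(1)}\}$ (up to a sign typo in the paper's displayed exponent), which is summable over \emph{all} $n$, so a single Borel--Cantelli application finishes the proof with no subsequence argument. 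You instead run a second-moment/Chebyshev bound on the count $N_n$, absorbing the range-two dependence into the variance estimate rather than splitting into even/odd indices; this is clean and avoids the reduction to i.i.d.\ variables, but it only yields polynomial decay $\bbP(N_n<2/\gep)\lesssim n^{-\alpha\gep/2}$, which is not summable for small $\gep$, forcing the dyadic subsequence plus interpolation. Your handling of that last step is the delicate point and you do it correctly: the naive comparison of $N_n$ with $N_{2^m}$ fails because the threshold increases with $n$, and your auxiliary count $N^{(m)}$ (range $1\leq k\leq 2^m$, threshold $(2^{m+1})^{1/\alpha-\gep}$) makes both the range and the threshold move favourably, with $\bbE[N^{(m)}]=2^{m\alpha\gep+o(m)}\to\infty$ so the same Chebyshev bound applies. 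In short: the paper's combinatorial bound buys stretched-exponential decay and a one-line Borel--Cantelli; your second-moment argument buys a more standard and arguably more robust estimate (it would survive weaker tail control or more general local dependence) at the cost of the extra dyadic bookkeeping.
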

\begin{proof}
We may reduce to a sequence of i.i.d.\ random variables $\rho_k$ by separately proving the statement for the sequences of i.i.d.\ r.v.'s $(\rho_{2i})_{i\in\N}$ and $(\rho_{2i+1})_{i\in\N}$. In the following, we therefore assume that $(\rho_k)_{k\geq 1}$ is a sequence of i.i.d.\ r.v.'s which satisfy \eqref{eq:tailrho}.
By the independence of the $\rho_k$ we have that 
\begin{align*}
\bbP(A_n^c)
	& \leq \bbP\Big( \exists \, 1\leq k_1,\ldots, k_{2/\gep}\leq n:\, \rho_{k} \le n^{1/\ga -\gep} \ \text{ for all } k \notin \{k_1,\ldots, k_{2/\gep}  \} \Big)\\
	&\leq  n^{2/\gep}   \bbP\big(\rho_0 \leq n^{1/\ga -\gep} \big)^{n-2/\gep}  	\leq n^{2/\gep} \exp \Big\{-\frac{n}{2}\, \bbP\big(\rho_0 > n^{1/\ga -\gep} \big) \Big\} \, ,
\end{align*}
the last inequality holding for $n \ge 4/\gep$, using also that $(1-x)^{n/2} \le \e^{-nx/2}$.
Then, we use \eqref{eq:tailrho} to get that $n\, \bbP(\rho_0 > n^{1/\ga -\gep} )= n^{-\ga \gep +o(1)} $. Therefore,
\[ 
\bbP( A_n^c ) 
	\leq \exp \big\{ - n^{-\ga \gep +o(1)}\big\}\, , \]
which is summable; the conclusion follows by Borel-Cantelli's lemma. 
\end{proof}

\smallskip

\noindent\textbf{Step 2:} {\it Deep traps slowdown the walk.}
We are now ready to prove the lower bound in \eqref{trenone}: we show that, for both models,
\begin{equation}
\label{eq:liminfTn}
 \liminf_{n\to\infty} \frac{\log T_n}{ \log n} 
 	\geq \frac1\ga \qquad \bbP \otimes P^{\go}-a.s.
\end{equation}

 Let us fix $\gep>0$ (we assume for simplicity that $2/\gep$ is an integer). Thanks to Lemma~\ref{lem:maxBC}, it will be sufficient to control $P^{\go}( T_n \leq  n^{1/\ga -2\gep} )$ only on the event $A_n$ that there are at least $2/\gep$ points $k$ such that $\rho_k \geq n^{1/\ga -\gep}$: let us denote them by $k_1,\ldots k_{2/\gep}$.
 On $A_n$ we have that
\begin{align*}
P^{\go} \big(  T_n \leq  n^{1/\ga - 2\gep} \big)
	\leq \prod_{i=1}^{2/\gep} P^{\go}\big( G_{k_i}  \leq  n^{1/\ga - 2\gep} \big)\,,
\end{align*}
where $G_k$ are geometric random variables with respective parameter $p_k = \go_k \leq 1/\rho_k$. In fact, every time the walk is in $k$, it tries to overjump the edge $\{k,k+1\}$ and has a probability $p_k$ of succeeding:
$G_k$ represents the random number of attempts the random walk has to make before crossing $\{k,k+1\}$ for the first time. 
Since we have that $p_{k_i} \leq 1/\rho_{k_i} < n^{-1/\ga +\gep}$ for all $i=1,..., 2/\gep$, we get that, provided that $n$ is large enough,
\[ 
P^{\go}\big( G_{k_i}  \leq  n^{1/\ga - 2\gep} \big)  
	= 1- \big(  1-p_{k_i} \big)^{n^{1/\ga - 2\gep}} 
	\leq 2 n^{-\gep} \, . 
\]
Therefore, on the event $A_n$ and for $n$ big enough, we get that 
\[
P^{\go}  \big(  T_n \leq  n^{1/\ga - 2\gep} \big) 
	\leq 4^{1/\gep} n^{-2} \, .
\]
Since by Lemma \ref{lem:maxBC} we know that $A_n$ is realized for $n$ big enough $\bbP$-a.s., an application of Borel-Cantelli's lemma gives that
\[\bbP-a.s.,\ P^{\go}-a.s.   \text{ one eventually has } T_n >  n^{1/\ga - 2\gep}\, . \]
This proves that for any $\gep>0$, a.s.\ $\log T_n /\log n \ge 1/\ga -2\gep$ for $n$ large enough, and \eqref{eq:liminfTn} follows.

\subsection{Upper bound in \eqref{trenone}}

Also for the upper bound we divide the proof in two steps. First we show that, $\bbP$-a.s., $E^{\go}[T_n] \leq n^{1/\ga +\gep} $ as $n\to\infty$; then we use this estimate to obtain a bound on $P^{\go}(T_n > n^{1/\ga +\gep/2})$ which in turn gives the upper bound in \eqref{trenone}.

\smallskip

\noindent\textbf{Step 1:} {\it Estimates on $E^{\go}[T_n]$.}
A central tool of our analysis will be the the following representation for the expectation of the hitting times (cfr.~\cite[Formula (3.22)]{B06})
\begin{align}\label{magicformula}
E^{\go} [T_n]=\frac{1}{C_{\rm eff}(\{0\}\leftrightarrow\{n\})}\sum_{k<n}\pi(k)P^\go_k(\tau_0<\tau_n)
\end{align}
where $\pi(k):=c_{k-1}^\gd+c_k^\gd$ is a reversible measure for $(X_n)_{n\in\N}$ (analogously, $\pi(k):=c_{k-1}^\gl+c_k^\gl$ for the R1M), $\tau_j$ denotes the first time the walk hits point $j\in\Z$ and $\Ceff$ is the effective conductance introduced after \eqref{ponchielli}.
We notice that in \eqref{magicformula} the quantity $P^\go_k(\tau_0<\tau_n)$ is equal to $1$ for $k\leq 0$, while for the other $k$'s we can use \eqref{useful}.
Moreover $C_{\rm eff}$ is very easy to handle in our case: for $i<k<j$ we have
\begin{align}
C_{\rm eff}(\{0\}\leftrightarrow\{k\})&=S(i,j-1)^{-1}\label{effettivamente1}\\
C_{\rm eff}(\{k\}\leftrightarrow\{i\}\cup\{j\})&=S(i,k-1)^{-1}+S(k,j-1)^{-1}\,,\label{effettivamente2}
\end{align}
where $S(i,j):=\sum_{\ell=i}^j\frac{1}{c_\ell^\star}$. For the first formula we have used that we have conductances in series, while for the second formula we have two sequences of conductances-in-series that are in parallel.
 Therefore, using \eqref{effettivamente1} and \eqref{effettivamente2}, we get that for both our models
\begin{align}\label{eq:ETn}
E^{\go} [T_n]
	&=\frac{1}{S(0,n-1)^{-1}}\Big(\sum_{k\leq 0}\pi(k)+\sum_{0<k<n}\pi(k)\frac{S(0,k-1)^{-1}}{S(0,k-1)^{-1}+S(k,n-1)^{-1}}\Big)\nonumber\\
	&=\sum_{k\leq 0}(c_{k-1}^\star+c_k^\star)S(0,n-1)+\sum_{0<k<n}(c_{k-1}^\star+c_k^\star)S(k,n-1) \,.
\end{align} 
This formula allows us to prove the following. 
\begin{proposition}
\label{prop:ETn}
For any $\gb >1/\ga$, we have that
\[\bbP\big(\, E^{\go}[T_n] \geq n^{\gb }\, \big)  \leq n^{1-\ga \gb +o(1)} \, .\]
As a consequence, for every $\gep>0$, we get that, $\bbP$-a.s., there exists some $n_0(\gep,\go)$ such that, for any $n\geq n_0$,
\begin{equation}
\label{ETn}
E^{\go}[T_n] \leq n^{ \frac{1}{\ga} +\gep+o(1)} \, .
\end{equation}
\end{proposition}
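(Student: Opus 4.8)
The plan is to estimate the two sums appearing in the exact formula \eqref{eq:ETn} for $E^{\go}[T_n]$. The key observation is that each summand in \eqref{eq:ETn} is (up to constant factors) a product of a ``local conductance term'' $c_{k-1}^\star + c_k^\star$ and a ``partial-sum term'' like $S(k,n-1) = \sum_{\ell=k}^{n-1} 1/c_\ell^\star$, and that each such product can be rewritten in terms of the $\rho$-variables whose tail we already control via \eqref{eq:tailrho}. Concretely, for the BiRC one has $c_k^\delta = \e^{2\gd k}c_k$, so a term like $(c_{k-1}^\delta+c_k^\delta)S(k,n-1)$ is comparable to $\e^{2\gd k}c_k \sum_{\ell=k}^{n-1}\e^{-2\gd\ell}/c_\ell = \sum_{\ell\geq k}\e^{-2\gd(\ell-k)} c_k/c_\ell$, and $c_k/c_\ell$ is a product of $\rho$'s along the stretch. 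The geometric weights $\e^{-2\gd(\ell-k)}$ make the sum dominated by a bounded number of adjacent terms, so each summand behaves essentially like a single $\rho_j$ (times an $O(1)$ random factor with all moments), hence has tail exponent $\ga$. For the R1M the analogous manipulation works using the fact that $x_j - x_k$ grows linearly, producing the same geometric decay.

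First I would make precise the reduction: show that $E^{\go}[T_n] \leq C \sum_{j} \xi_j^{(n)}$ where each $\xi_j^{(n)}$ is a nonnegative random variable with $\bbP(\xi_j^{(n)} > t) \leq t^{-\ga + o(1)}$ (uniformly in $j\leq n$, with the $o(1)$ possibly depending on $n$), and there are $O(n)$ of them. For the part $\sum_{k\leq 0}(c_{k-1}^\star+c_k^\star)S(0,n-1)$ I would note that $\sum_{k\leq 0}(c_{k-1}^\delta + c_k^\delta) = \sum_{k\leq 0}(\e^{2\gd(k-1)}c_{k-1}+\e^{2\gd k}c_k)$ converges $\bbP$-a.s.\ (and is $\bbP$-a.s.\ finite, even with finite expectation since $\bbE[c_0]$ may be infinite but the geometric weights dominate), so this contribution is $O(1)\cdot S(0,n-1)$; and $S(0,n-1) = \sum_{\ell=0}^{n-1}\e^{-2\gd\ell}/c_\ell$ has tail $t^{-\ga_0+o(1)}$ by Lemma~\ref{prop:producteasy} applied to the $Y$-type variable, which is $\leq t^{-\ga+o(1)}$. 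The same Borel–Cantelli control as elsewhere in the paper then handles this term. For the main sum $\sum_{0<k<n}(c_{k-1}^\star+c_k^\star)S(k,n-1)$ I would bound the product pointwise by a convergent geometric series of $\rho$-products as above, collect terms, and get $E^{\go}[T_n] \leq C\sum_{j=1}^{n} \tilde\rho_j$ where $\tilde\rho_j$ has tail $t^{-\ga+o(1)}$.

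Then a union bound gives, for $\gb > 1/\ga$,
\[
\bbP\big(E^{\go}[T_n] \geq n^{\gb}\big)
  \leq \bbP\Big(C\sum_{j=1}^{n}\tilde\rho_j \geq n^{\gb}\Big)
  \leq n\, \bbP\big(\tilde\rho_0 \geq c\, n^{\gb-o(1)}\big)
  = n\cdot n^{-\ga\gb + o(1)} = n^{1-\ga\gb+o(1)},
\]
which is the first assertion of Proposition~\ref{prop:ETn}. For the second assertion, fix $\gep>0$ and apply the first bound with $\gb = 1/\ga + \gep$: then $1-\ga\gb = -\ga\gep < 0$, so $\bbP(E^{\go}[T_n]\geq n^{1/\ga+\gep})$ is summable in $n$ along, say, a polynomially spaced subsequence, and monotonicity/interpolation of $n\mapsto E^{\go}[T_n]$ together with Borel–Cantelli upgrades this to the almost-sure bound \eqref{ETn} for all large $n$ (absorbing the loss into the $+o(1)$).

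The main obstacle I anticipate is making the reduction $E^{\go}[T_n] \leq C\sum_j \tilde\rho_j$ genuinely rigorous and uniform: one must check that the $O(1)$ random prefactors produced when summing the geometric series (e.g.\ $\sum_{m\geq 0}\e^{-2\gd m}\prod_{i} \rho$-ish factors) really do have lighter-than-$\ga$ tails so that Lemma~\ref{prop:producteasy} applies with the right variable in the role of $X$, and that the finitely-many-dependence structure of the $\rho_k$ (range two) does not spoil the union bound — exactly as in the proof of Lemma~\ref{lem:maxBC}, one should split into the even and odd sublattices. The R1M case requires the additional a.s.\ input that $x_j - x_k \geq (j-k)\bbE[Z_0]/2$ and $Z_j$ is not too large, for all large indices, which is the same ergodic-theorem plus Borel–Cantelli estimate already used in the proof of Proposition~\ref{kingofthenorth}.
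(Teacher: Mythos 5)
Your overall skeleton matches the paper's: start from \eqref{eq:ETn}, reduce $E^{\go}[T_n]$ (up to boundary terms) to a sum of $O(n)$ identically distributed variables $V_k := c_k^\star\sum_{\ell>k}1/c_\ell^\star$ (your $\tilde\rho_j$), whose tail exponent is $\ga$ by Lemma \ref{prop:producteasy} and Lemma \ref{lem:tailXY}, and then pass from the polynomial probability bound to \eqref{ETn} via Borel--Cantelli along a sparse subsequence plus monotonicity of $n\mapsto E^{\go}[T_n]$. The genuine gap is your displayed step $\bbP\big(C\sum_{j=1}^{n}\tilde\rho_j\geq n^{\gb}\big)\leq n\,\bbP\big(\tilde\rho_0\geq c\,n^{\gb-o(1)}\big)$: this is not a union bound. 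If a sum of $n$ nonnegative terms exceeds $n^{\gb}$, all you may conclude is that the largest term exceeds $c\,n^{\gb-1}$, and the resulting estimate $n\cdot n^{-\ga(\gb-1)+o(1)}=n^{1+\ga-\ga\gb+o(1)}$ is too weak: with $\gb=1/\ga+\gep$ it equals $n^{\ga(1-\gep)+o(1)}$, which does not even tend to $0$. The ``one big jump'' principle you are implicitly invoking is precisely what must be proved at this point, and it cannot be imported from standard i.i.d.\ results because the $V_k$'s are strongly and long-range dependent (each involves all conductances to the right of $k$); your even/odd-sublattice remark, which handles the range-two dependence of the $\rho_k$'s in Lemma \ref{lem:maxBC}, does not apply here. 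The paper closes exactly this gap by truncation: split each $V_k$ at level $n^{\gb}/10$, bound the contribution of large values by the genuine union bound $n\,\bbP(V_0\geq n^{\gb}/10)=n^{1-\ga\gb+o(1)}$, and bound the contribution of small values by Markov's inequality with the truncated mean $\bbE[V_0\ind_{\{V_0<t\}}]=t^{1-\ga+o(1)}$, which again gives $n^{1-\ga\gb+o(1)}$; neither step requires independence of the $V_k$'s.

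A secondary issue: for the boundary contribution $\sum_{k\leq 0}(c_{k-1}^\star+c_k^\star)\,S(0,n-1)$ you cannot treat the prefactor $\sum_{k\leq 0}(c_{k-1}^\star+c_k^\star)$ as an $O(1)$ constant, and your claim that it has finite expectation is false whenever $\bbE[c_0]=+\infty$ (the geometric weights only rescale $\bbE[c_0]$). For the BiRC this prefactor is a heavy-tailed random variable with tail exponent $\ga_\infty$, and when $\ga=\ga_\infty\leq\ga_0$ the dominant way for this term to reach $n^{\gb}$ is through the prefactor being large, not through $S(0,n-1)$; so for the first assertion of Proposition \ref{prop:ETn} you must apply Lemma \ref{prop:producteasy} to the product of the prefactor and $S(0,n-1)$ (as the paper does for its term $A$ via the decomposition $1+X/c_0^\star+c_0^\star Y+XY$), possibly exchanging the roles of the two factors, rather than only controlling $S(0,n-1)$. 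Your interpolation argument for the second assertion (polynomially spaced subsequence plus monotonicity) is fine and equivalent to the paper's dyadic one.
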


\begin{proof}
The second part of the lemma comes as an easy consequence of the first part. Indeed, we have $\bbP(E^{\go}[T_n] \geq n^{1/\ga +\gep}) \leq n^{-\ga \gep +o(1)}$, so an application of Borel-Cantelli lemma gives that, $\bbP$-a.s., $E^{\go}[T_{2^{k}}] \leq (2^k)^{1/\ga +\gep}$ for $k\geq k_0(\go)$. Then, since $E^{\go}[T_n]$ is non-decreasing, we can set $k_n=\floor{\log_2 n}$ and see that
\[
E^{\go}[T_n] 
	\leq E^{\go}[T_{2^{k_n+1}}] 
	\leq  (2^{k_n+1})^{1/\ga +\gep}
	\leq (2n)^{\frac1\ga +\gep}  
\]
for all $n\geq 2^{k_0(\go)}=:n_0(\go)$, 
 $\bbP$-a.s.

\smallskip

We therefore focus on the first part of the lemma.
We start from equation \eqref{eq:ETn}, which admits as an easy upper bound
\begin{align*}
E^{\go}[T_n] 
	& \leq  2  \sum_{k\leq 0}  c_k^\star  \sum_{\ell=0}^{n-1} \frac{1}{c_{\ell}^\star}  + 2 \sum_{ k =0}^{n-1} c_k^\star \sum_{\ell =k}^{n-1} \frac{1}{c_{\ell}^\star} 
	=: 2A+2B\,.
\end{align*}
Hence we get
\begin{equation}\label{canaglia}
\bbP\big(E^{\go}[T_n]  \geq n^{\gb}  \big) 
	\leq \bbP\big( A\geq n^\gb/4 \big) + \bbP\big( B \geq n^\gb /4  \big)\,,
\end{equation}
and we want to control the two probabilities. 
As a preliminary, we give an estimate on the tail of the two random variables of interest. We postpone the proof of the lemma to the end of Step 1.
\begin{lemma}
\label{lem:tailXY}
Set $X := \sum_{k\leq -1} c_{k}^\star$ and $Y:=  \sum_{\ell \geq 1} 1/c_{\ell}^\star$. We have:
\begin{itemize}
	\item[(i)] For the BiRC
	\begin{equation}
	P(X>t) 
		= t^{-\ga_{\infty} +o(1)}  \quad \text{and} \quad \bbP(Y>t) 
		= t^{-\ga_0 +o(1)}   \qquad \text{as } t\to\infty \, .
	\end{equation}
	\item[(ii)] For the R1M (recall that $\ga(\gl)=(1-\lambda_c)/(1-\lambda)$)
	\begin{equation}
	P(X>t) 
		\leq  t^{-\ga(\gl) +o(1)} \quad \text{and} \quad \bbP(Y>t) = t^{-\ga(\gl) +o(1)} \qquad \text{as } t\to\infty \, .
\end{equation}
\end{itemize}
\end{lemma}

\noindent
We can deal now with the first term (Term A) and second term (Term B) in the r.h.s.~of~\eqref{canaglia}.

\smallskip

\noindent{\it Term A.}
We pull out the term $\ell=0$ and $k=0$ in A, so that we can write
\[
A 
	=  \big(c_0^\star + X\big)\Big(\frac{1}{c_0^\star}+ \sum_{\ell =1}^{n-1} \frac{1}{c_\ell^\star} \Big)
	\leq
	1+\frac{1}{c_0^\star} X  + c_0^\star \,Y+ XY \, ,
\]
with $X$ and $Y$ defined as in Lemma \ref{lem:tailXY}.
Hence, we get that
\begin{align}\label{terma}
\bbP \big( A \geq  n^{\gb}/4 \big ) 
	\leq \bbP\big(  X/ c_0^\star\geq n^{\gb}/16 \big) + \bbP\big( {c_0^\star} Y \geq n^{\gb}/16 \big)+ \bbP\big(  X Y \geq n^{\gb}/16 \big) 	\, .
\end{align}
Using Lemma~\ref{prop:producteasy} (note that $X$, $Y$ and $c_0^\star$ are mutually independent), we therefore get that $\bbP(A\geq  n^{\gb}/4)  \leq n^{-\ga \gb + o(1)}$.

\smallskip

\noindent{\it Term B.}
For the term B, we  pull out the terms $\ell =k$, so that we can write
\[
B  \leq  n +\sum_{k=0}^{n-1} c_k^\star \sum_{\ell >k} \frac{1}{c_{\ell}^\star}\, .
\]
Hence, setting $V_k:= c_k^\star \sum_{\ell >k} \frac{1}{c_{\ell}^\star}$, and for $n$ so large that $ n^\gb/4 -n \geq n^\gb/5$, we get that
\begin{equation}
\label{eq:Bsum}
\bbP\big( B \geq n^\gb/ 4\big) 
	\leq \bbP\Big( \sum_{k=0}^{n-1} V_k \geq n^\gb/5 \Big)\, .
\end{equation}
Note that the $V_k$'s are not independent, but they have the same distribution as $V_0 = c_0^\star Y $.
Moreover, Lemma \ref{lem:tailXY}, combined with Lemma~\ref{prop:producteasy} (possibly exchanging the roles of $c_0^\star$ and $Y$) gives that 
\begin{equation}
\label{eq:tailW}
\bbP(V_0 >t) = \bbP\big( c_0^\star \,Y  > t\big) = t^{-\ga +o(1)} \quad \text{as } t\to \infty.
\end{equation}

Now, going back to \eqref{eq:Bsum}, we split the sum of the $V_k$'s into two parts, writing
\begin{equation}
\bbP\Big( \sum_{k=0}^{n-1} V_k \geq \frac{n^\gb}{5} \Big) 
	\leq \bbP\Big( \sum_{k=0}^{n-1} V_k \ind_{\{ V_k \ge  n^\gb/10  \}} 
	\geq \frac{ n^\gb}{10} \Big) + \bbP\Big( \sum_{k=0}^{n-1} V_k \ind_{\{ V_k <  n^\gb/10  \}}  \geq  \frac{n^\gb}{10} \Big)\, .
\label{eq:splitsum}
\end{equation}
For the first term, we notice that the event is realized if and only if one of the indicator function is non-zero. Using a union bound, the first term is therefore bounded by
\[
\bbP\Big( \exists \ k \in \{ 0,1,\dots, n-1\}:\, V_k \geq n^\gb/10 \Big) 
	\leq  n\, \bbP\big( V_0 \geq n^\gb/10 \big) 
	= n^{1- \ga \gb +o(1)} \, . 
\]
For the second term, we simply use Markov's inequality to get 
\begin{align*}
\bbP\Big( \sum_{k=0}^{n-1} V_k \ind_{\{ V_k <  n^\gb/10  \}}  \geq  \frac{n^\gb}{10} \Big)
	\leq10\, n^{1-\gb}\, \bbE\big[ V_0 \ind_{\{V_0 < n^\gb/10\}}\big] 
 	= n^{1-\ga \gb +o(1)}
\end{align*}
where we used that $\bbE[V_0 \ind_{\{V_0 <t\}}] = t^{1-\ga +o(1)}$ as $t\to\infty$, thanks to \eqref{eq:tailW}.

\smallskip

Plugging the last two estimates in \eqref{eq:splitsum} and going back to \eqref{eq:Bsum} proves that $\bbP(B \ge n^\gb/4) = n^{1-\ga\gb +o(1)}$. This and \eqref{terma}, inserted into \eqref{canaglia}, conclude the proof.
\end{proof}

\smallskip

\begin{proof}[Proof of Lemma \ref{lem:tailXY}]
For item $(i)$, we focus on $\bbP(X>t)$, since the tail of $Y$ can be found in a similar way. On the one hand, since $X\ge c_{-1}^{\gd}$, we simply have that
$\bbP(X>t)\geq \bbP(c_0>t)=t^{-\ga_\infty+o(1)}$.
On the other hand, $X = \sum_{k\le -1} c_{k}^{\gd} >t$ implies that there exists some $j\ge 1$ such that $c_{-j}^{\gd}> \gd \e^{-\gd j}t $, so that a union bound gives
\begin{align*}
\bbP(X>t)
	\leq\sum_{j=1}^\infty \bbP(c_0>\gd \e^{\gd j}t)
	&=\sum_{j=1}^\infty (\gd \e^{\gd j}t)^{-\ga_\infty+o(1)}
	=t^{-\alpha_\infty+o(1)}\, .
\end{align*}

\smallskip

For item $(ii)$, for $X$, we get that by Markov inequality and the independence of the $Z_i$'s
\begin{align*}
\bbP(X>t)
	\leq \bbP\Big(\sum_{k\leq -1}\e^{-\gl(Z_k+\dots+Z_{-1})}>t\Big)
	\leq \frac{1}{t}\sum_{k\geq 1}\bbE\big[\e^{-\gl Z_0}\big]^k
	\leq t^{-\ga(\gl)+o(1)}\,
\end{align*}
since $\bbE[\e^{-\gl Z_0}]$ is clearly smaller than $1$ and $\ga(\gl)<1$. 
We now turn to the tail of $Y$. First, we have the lower bound
\begin{align*}
\bbP(Y>t)
	&\geq \bbP \big(\e^{(1-\gl)Z_1-2\gl Z_2}>t \big)
	\geq \bbP(Z_2 \ge b ) \bbP(\e^{(1-\gl)Z_1}>t \e^{2\gl b})  
	= t^{-\ga(\gl)+o(1)}\,.
\end{align*}
For the first inequality we have restricted the sum in $Y$ to the first summand. For the secon inequality, we used the independence of the $Z_i$'s, and we used some constant $b>0$ such that $\bbP(Z_2\le b)>0$. Finally, for the last identity, we just used that for $a>0$, $\bbP( Z_1> a\log t) = t^{-a(1-\lambda_c) +o(1)}$, see \eqref{def:lambdac} and below.

For the upper bound, we take a constant $\eta>0$ to be determined later, and call $K:=\e^{-\eta}/(1-\e^{-\eta})$. We notice that $Y = \sum_{\ell\ge 1} 1/c_\ell^{\gl}>t$ implies that there exists at least some $\ell\geq 1$ such that $1/c_\ell^\gl>t\,\e^{-\eta \ell}/K$. A union bound therefore gives
\begin{align*}
\bbP(Y>t)
	\leq \sum_{\ell\geq 1}\bbP\big( \e^{(1-\gl)Z_\ell-2\gl(Z_0+\dots+Z_{\ell-1})}>t\,\e^{-\ell \eta}/K \big)\,.
\end{align*}
We fix some $\gep>0$ and we use Markov inequality (with the $(\alpha(\gl)\! - \!\gep)$-th moment) to obtain
\begin{align*}
\bbP(Y>t)
	&\leq \sum_{\ell\geq 1} \big(t^{-1} K\e^{\ell \eta}\big)^{\ga(\gl)-\gep}\,\bbE\Big[ \Big(\e^{(1-\gl)Z_\ell-2\gl(Z_0+\dots+Z_{\ell-1})} \Big)^{\ga(\gl)-\gep}\Big]\\
	&\leq t^{-\ga(\gl)+\gep}\,B\sum_{\ell\geq 1} \Big(\e^{ \eta(\ga(\gl)-\gep)}\,\bbE\Big[\e^{-2\gl(\ga(\gl)-\gep)Z_0}\Big]\Big)^\ell
\end{align*} 
where $B=\bbE[ \e^{(1-\gl_c-\gep(1-\gl)) Z_0}] K^{\ga(\gl)-\gep}$ is a finite constant for each $\gep>0$. The geometric sum is also finite if we choose $\eta$ small enough. It follows that $\bbP(Y>t)\leq t^{-\ga(\gl)+\gep+o(1)}$ for arbitrary $\gep>0$, and this concludes the proof.
\end{proof}

\smallskip

\noindent\textbf{{Step 2}: \it Conclusion of the argument.}
We finally prove that
\begin{equation}
\label{eq:limsupTn}
\limsup_{n\to\infty} \frac{\log T_n}{\log n} 
	\leq \frac1\ga \qquad \bbP \otimes P^{\go}-a.s.
\end{equation}
Let us fix $\gep>0$. Thanks to Markov inequality and Proposition~\ref{prop:ETn}-\eqref{ETn}, we get that, $\bbP$-a.s., for $n$ large enough, we have
\[
P^{\go}\big(T_n > (n/2)^{1/\ga +2\gep}\big) 
	\leq (n/2)^{-1/\ga -2\gep} E^{\go}[T_n] 
	\leq 2^{1/\ga +2\gep} n^{-\gep} \,  .
\]
Hence, by an application of Borel-Cantelli lemma, we get that, $P^{\go}$-a.s., there exists some $k_0=k_0(\go)\geq 1$ such that 
$T_{2^k} 
	\leq \big( 2^{k-1} \big)^{1/\ga +2\gep}$  for all  $k\ge k_0$.
We therefore conclude by observing that $T_n$ is increasing, so that for any $n $ such that $k_n: =\floor{\log_2 n} \ge k_0$ we get that 
\[T_n \le T_{2^{k_n +1}} \le \big( 2^{k_n} \big)^{1/\ga +2\gep} \le n^{\frac{1}{\ga} +2\gep}\quad \text{ for all } n\ge n_0(\go):=2^{k_0} \, .\]
This proves that for any $\gep>0$, one eventually has $\log T_n /\log n \le 1/\ga + 2\gep$ a.s., and \eqref{eq:limsupTn} follows.

\bibliography{bibliografia}

\begin{thebibliography}{ESTZ13}

\bibitem[Bov06]{B06}
Anton Bovier.
\newblock Metastability: a potential theoretic approach.
\newblock In {\em International {C}ongress of {M}athematicians. {V}ol. {III}},
  pages 499--518. Eur. Math. Soc., Z\"urich, 2006.

\bibitem[Cli86]{C86}
Daren B.~H. Cline.
\newblock Convolution tails, product tails and domains of attraction.
\newblock {\em Probab. Theory Relat. Fields}, 72(4):529--557, 1986.

\bibitem[EG80]{EG80}
Paul Embrechts and Charles~M. Goldie.
\newblock On closure and factorization properties of subexponential and related
  distributions.
\newblock {\em J. Austral. Math. Soc. Ser. A}, 29(2):243--256, 1980.

\bibitem[ESTZ13]{ESTZ13}
Nathana\"el Enriquez, Christophe Sabot, Laurent Tournier, and Olivier Zindy.
\newblock Quenched limits for the fluctuations of transient random walks in
  random environment on {$\Bbb Z^1$}.
\newblock {\em Ann. Appl. Probab.}, 23(3):1148--1187, 2013.

\bibitem[ESZ09]{ESZ09}
Nathana\"el Enriquez, Christophe Sabot, and Olivier Zindy.
\newblock Limit laws for transient random walks in random environment on {$\Bbb
  Z$}.
\newblock {\em Ann. Inst. Fourier (Grenoble)}, 59(6):2469--2508, 2009.

\bibitem[FGS16]{FGS16}
Alessandra Faggionato, Nina Gantert, and Michele Salvi.
\newblock The velocity of 1d mott variable range hopping with external field.
\newblock {\em AIHP}, to appear, 2016.

\bibitem[FGS17]{FGS17}
Alessandra Faggionato, Nina Gantert, and Michele Salvi.
\newblock Einstein relation and linear response in one-dimensional mott
  variable-range hopping.
\newblock {\em arXiv:1708.09610}, 2017.

\bibitem[FK16]{FK16}
Alexander Fribergh and Daniel Kious.
\newblock Scaling limits for sub-ballistic biased random walks in random
  conductances.
\newblock {\em Annals of Probability, to appear}, 2016.

\bibitem[Fri13]{F13}
Alexander Fribergh.
\newblock Biased random walk in positive random conductances on {$\Bbb{Z}^d$}.
\newblock {\em Ann. Probab.}, 41(6):3910--3972, 2013.

\bibitem[KKS75]{KKS75}
H.~Kesten, M.~V. Kozlov, and F.~Spitzer.
\newblock A limit law for random walk in a random environment.
\newblock {\em Compositio Math.}, 30:145--168, 1975.

\bibitem[LP16]{LP16}
Russell Lyons and Yuval Peres.
\newblock {\em Probability on trees and networks}, volume~42 of {\em Cambridge
  Series in Statistical and Probabilistic Mathematics}.
\newblock Cambridge University Press, New York, 2016.

\bibitem[Sin82]{Sinai82}
Ya.~G. Sina\u\i.
\newblock The limiting behavior of a one-dimensional random walk in a random
  medium.
\newblock {\em Th. Probab. Appl.}, 27:256--268, 1982.

\bibitem[TZ04]{TZ04}
Simon Tavar\'e and Ofer Zeitouni.
\newblock {\em Lectures on probability theory and statistics}, volume 1837 of
  {\em Lecture Notes in Mathematics}.
\newblock Springer-Verlag, Berlin, 2004.
\newblock Lectures from the 31st Summer School on Probability Theory held in
  Saint-Flour, July 8--25, 2001, Edited by Jean Picard.

\end{thebibliography}
\bibliographystyle{alpha}
%
%
%
%
%
%
%
%
%
%

\end{document}